\newtheorem{theorem}{Theorem}[section]
\newtheorem{lemma}[theorem]{Lemma}
\newtheorem{proposition}[theorem]{Proposition}
\newtheorem{corollary}[theorem]{Corollary}
\theoremstyle{remark}
\newtheorem{remark}[theorem]{\bf Remark}
\def\NN{\mathds{N}}
\def\RR{\mathbb{R}}
\def\QQ{\mathbb{Q}}
\def\CC{\mathbb{C}}
\def\ZZ{\mathbb{Z}}
\def\kk{\mathds{k}}
\begin{document}
	
\def\NN{\mathbb{N}}
\def\RR{\mathds{R}}
\def\HH{I\!\! H}
\def\QQ{\mathbb{Q}}
\def\CC{\mathds{C}}
\def\ZZ{\mathbb{Z}}
\def\DD{\mathds{D}}
\def\OO{\mathcal{O}}
\def\kk{\mathds{k}}
\def\KK{\mathbb{K}}
\def\ho{\mathcal{H}_0^{\frac{h(d)}{2}}}
\def\LL{\mathbb{L}}
\def\L{\mathds{k}_2^{(2)}}
\def\M{\mathds{k}_2^{(1)}}
\def\k{\mathds{k}^{(*)}}
\def\l{\mathds{L}}

\selectlanguage{english}
\title[Units and  $2$-class field towers...]{Units and  $2$-class field towers  of some multiquadratic number fields}

\author[M. M. Chems-Eddin]{Mohamed Mahmoud Chems-Eddin}
\address{Mohamed Mahmoud CHEMS-EDDIN: Mohammed First University, Mathematics Department, Sciences Faculty, Oujda, Morocco }
\email{2m.chemseddin@gmail.com}

\author[A. Zekhnini]{Abdelkader Zekhnini}
\address{Abdelkader Zekhnini: Mohammed First University, Mathematics Department, Pluridisciplinary faculty, Nador, Morocco}
\email{zekha1@yahoo.fr}

\author[A. Azizi]{Abdelmalek Azizi}
\address{Abdelmalek Azizi: Mohammed First University, Mathematics Department, Sciences Faculty, Oujda, Morocco }
\email{abdelmalekazizi@yahoo.fr}

\subjclass[2000]{11R04, 11R27, 11R29, 11R37.}
\keywords{Multiquadratic number fields,  unit groups, $2$-class groups, Hilbert $2$-class field towers.}

 \begin{abstract} 
 	In this paper, we investigate  the unit  groups,  the $2$-class groups,  the  $2$-class field towers and the structures of  the   second $2$-class groups   of some  multiquadratic number fields of degree $8$ and $16$.

 	\keywords{Multiquadratic number fields,  unit groups, $2$-class groups, Hilbert $2$-class field towers.}
 \end{abstract}
 \maketitle
 
 \section{Introduction}
 \label{Sec:1}
 
 Let $k$ be  an algebraic number field and $\mathrm{Cl}_2(k)$  its $2$-class group, that
 is the  $2$-Sylow subgroup of the ideal class group $\mathrm{Cl}(k)$  of $k$. Let  $k^{(1)}$  be the Hilbert $2$-class field of $k$, that is the maximal unramified (including the infinite primes) abelian field extension of $k$ whose degree over $k$ is a $2$-power. Put $k^{(0)} = k$ and let $k^{(i)}$ denote the Hilbert $2$-class field of $k^{(i-1)}$
 for any integer $i\geq 1$. Then the sequence of fields
 $$k=k^{(0)} \subset k^{(1)} \subset  k^{(2)}  \subset \cdots\subset k^{(i)} \cdots $$
 is  called  the $2$-class field tower of $k$. If for all $i\geq1$,  $k^{(i)}\neq k^{(i-1)}$, the tower is said to be infinite, otherwise the tower is said to be  finite, and the minimal integer $i$ satisfying the condition $k^{(i)}= k^{(i-1)}$ is called the length of the tower. Unfortunately, deciding  whether or not the $2$-class field tower of a number field $k$ is finite is still an open problem and  there is no known method to study this   finiteness. However, it is known that if the rank of $\mathrm{Cl}_2(k^{(1)})\leq 2$, then by group theoretical meaning
 the tower is finite and its length is $\leq 3$ (cf.\cite{Bl}).  Furthermore, this  problem is closely related to  the structure of the Galois group of the tower. In particular, for $\mathrm{Cl}_2(k)$ being cyclic, the Hilbert $2$-class field tower of $k$ terminates at the first step $k^{(1)}$, whereas for $\mathrm{Cl}_2(k)$ being of type  $(2,2)$, the Hilbert $2$-class field tower of $k$ terminates in at most two steps and the structure of the Galois group $G_k=\mathrm{Gal}(k^{(2)}/k)$ is closely related to the capitulation problem in the unramified quadratic extensions of $k$ (cf. \cite{kisilvsky}), so to the unit groups of these extensions. In fact the number of classes which capitulate in these quadratic extensions of $k$ is given in terms of   their unit groups (cf. \cite{Heider}). In the literature, the most  studies done in this vein concern quadratic or biquadratic fields (e.g. \cite{kisilvsky,benja,Be05}). In this paper, we are interested in investigating  the $2$-class field towers of some multiquadratic number fields related to
 the imaginary triquadratic number field $\mathbb{Q}(\zeta_8,\sqrt{d})$ whenever its $2$-class group is of type $(2, 2)$, where $\zeta_8$
 is  a primitive $8$-th root of unity and $d$ is an odd positive square free integer.

 The layout of this paper is the following. In \S \ref{section 0}, we quote some properties of $2$-groups $G$ satisfying $G/G'\simeq (2, 2)$. Next, in \S \ref{section 01}, we characterize the $2$-class groups of some imaginary multiquadratic number fields and we compute their $2$-class numbers.
 In \S \ref{section 1}, involving  some technical computations, we give  unit groups of some multiquadratic number fields of degree $8$ and $16$.
 Thereafter, and as applications, in \S \ref{section 2},   we shall investigate the Hilbert $2$-class field tower   of
 some families of imaginary triquadratic  number fields; and then we deduce the capitulation behaviors  in the unramified quadratic extensions of these fields.
 \section*{Notations}
 Let $k$ be a number field. Throughout this paper we shall respect the following notations:
 \begin{enumerate}[$\bullet$]
 	\item $p$, $p'$ and $q$: Three    prime integers,
 	\item  $\mathrm{Cl}_2(k)$: The $2$-class group of $k$,
 	\item $h_2(k)$: The $2$-class number of $k$,
 	\item $h_2(d)$: The $2$-class number of the quadratic field $\mathbb{Q}(\sqrt{d})$,
 	\item $\varepsilon_d$: The fundamental unit of the quadratic field $\mathbb{Q}(\sqrt{d})$,
 	\item $E_k$: The unit group of $k$,
 	\item FSU: Abbreviation of ``fundamental system of units'',
 	\item $k^{(1)}$: The Hilbert $2$-class field of $k$,
 	\item $k^{(2)}$: The Hilbert $2$-class field of $k^{(1)}$,
 	\item $G_k$: The Galois group of $k^{(2)}/k$, i.e. $Gal(k^{(2)}/k)$,
 	\item $k^{*}$: The genus field of $k$,
 	\item $k^{+}$: The maximal real  subfield of $k$, whenever $k$ is imaginary,
 	\item $q(k)=[E_{k}: \prod_{i}E_{k_i}]$: The unit index of $k$, if $k$ is multiquadratic and  $k_i$ are  the  quadratic subfields of $k$,
 	\item $N_{k'/k}$: The norm map of an extension $k'/k$.
 \end{enumerate}

 \section{\textbf{Some preliminary results of   group theory}}\label{section 0}	
 Let $Q_m$,   $D_m$,  and $S_m$ denote  the quaternion,   dihedral and semidihedral groups respectively,  of order $2^m$,  where $m\geq3$ and  $m\geq4$ for $S_m$. In addition,  let $A$ denote the Klein four-group. Each of these groups is generated by two elements $x$ and $y$,  and admits  a representation by generators and relations as follows:
 $$\begin{array}{ll}
 A= \{x,  y\ |\ x^2 = y^2 = 1, \ y^{-1}xy = x\}, \\
 Q_m = \{x,  y \ |\ x^{2^{m-2}} = y^2 = a,  a^2 = 1, \ y^{-1}xy = x^{-1}\}, \\
 D_m = \{x,  y\ | \ x^{2^{m-1}} = y^2 = 1, \  y^{-1}xy = x^{-1}\}, \\
 S_m = \{x,  y\ |\ x^{2^{m-1}} = y^2 = 1, \  y^{-1}xy = x^{2^{m-2}-1}\}.
 \end{array}$$
 We shall recall some well know properties of $2$-groups $G$ such that $G/G'$ is of type $(2,  2)$,  where $G'$ denotes the  commutator subgroup of $G$. For more details about these properties, we refer the reader to \cite[pp. 272-273]{kisilvsky},  \cite[p. 162]{benja}  and \cite[Chap. 5]{Go}.

 Let $k$ be  an algebraic number field and   $\mathrm{Cl}_2(k)$ the $2$-Sylow subgroup of its ideal class group $\mathrm{Cl}(k)$. Let  $k^{(1)}$ (resp. $k^{(2)}$) be the first (resp. second) Hilbert $2$-class field of $k$ and  put $G=\mathrm{Gal}(k^{(2)}/k)$,  then if $G'$ denotes the commutator subgroup of $G$,  we have by class field theory $G'\simeq\mathrm{Gal}(k^{(2)}/k^{(1)})$ and  $G/G'\simeq\mathrm{Gal}(k^{(1)}/k)\simeq\mathrm{Cl}_2(k)$. Assume in all what follows that $\mathrm{Cl}_2(k)$ is of type $(2,  2)$,  then it is known  that $G$ is isomorphic to $A$,  $Q_m$,  $D_m$   or $S_m$.

 Let $x$ and $y$ be as above. Note that
 the  commutator subgroup $G'$ of $G$  is always cyclic and $G'= \langle x^2\rangle$. The group $G$ possesses exactly three subgroups of index $2$ which are:
 $$H_1 = \langle x\rangle, \; H_2 = \langle x^2,  y\rangle, \; H_3 = \langle x^2,  xy\rangle.$$
 Note that for the two cases $Q_3$ and $A$, each $H_i$ is cyclic. For the case $D_m$, with $m>3$,  $H_2$ and $H_3$ are also dihedral. For $Q_m$, with $m>3$,  $H_2$ and $H_3$ are quaternion. Finally for $S_m$,
 $H_2$ is dihedral whereas $H_3$ is quaternion. Furthermore,  if $G$ is isomorphic to $A$ (resp.  $Q_3$),  then the subgroups $H_i$ are cyclic of order $2$ (resp. $4$).
 If $G$ is isomorphic to $ Q_m$, with $m>3$,  $ D_m$ or $ S_m$,  then   $H_1$ is cyclic and $H_i/H_i'$ is of type $(2, 2)$ for $i\in \{2,  3\}$,  where $H_i'$ is the commutator subgroup of $H_i$.
 
 Let  $F_i$ be the subfield of $k^{(2)}$ fixed by $H_i$,  where $i\in\{1,  2,  3\}$.
 If $k^{(2)}\not=k^{(1)}$,  $\langle x^4\rangle$ is the unique subgroup of $G'$ of index $2$.
 Let  $L$ ($L$ is defined only if $k^{(2)}\not=k^{(1)}$)  be
 the subfield of $k^{(2)}$ fixed by $\langle x^4\rangle$. Then $F_1$,  $F_2$ and $F_3$ are the three quadratic subextensions of $k^{(1)}/k$ and $L$ is the unique subfield of $k^{(2)}$ such that $L/k$ is a nonabelian Galois extension of degree $8$.
 
 We continue by recalling the definition of Taussky's conditions $A$ and $B$ (\cite{Tau}).
 Let $k'$ be a cyclic unramified extension of a number field  $k$ and $j$ denotes the basic homomorphism:
 $j_{k'/k}:\mathrm{Cl}({k})\longrightarrow \mathrm{Cl}({k'}), $
 induced by extension of ideals from $k$ to $k'$. Thus,  we say
 \begin{enumerate}[\rm 1.]
 	\item $k'/k$ satisfies condition $A$ if and only if  $|ker(j_{k'/k})\cap N_{k'/k}(\mathrm{Cl}(k'))|>1$.
 	\item $k'/k$ satisfies condition $B$ if and only if  $|ker(j_{k'/k})\cap N_{k'/k}(\mathrm{Cl}(k'))|=1$.
 \end{enumerate}
 \noindent Set $j_{F_i/k}=j_i$,  $i=1, 2, 3$. Then we have:
 \begin{theorem}[\text{\cite[Theorem 2]{kisilvsky}}]~\ \label{1}
 	\begin{enumerate}[\rm1.]
 		\item If $k^{(1)}=k^{(2)}$,  then $F_i$ satisfy  condition $A$,  $|ker(j_i)|=4$,  for $i=1, 2, 3$,  and $G$ is abelian of type $(2,  2)$.
 		\item If $Gal(L/k)\simeq Q_3$,  then $F_i$ satisfy condition $A$ and $|ker(j_i)|=2$ for $i=1, 2, 3$ and $G\simeq Q_3$.
 		\item  If $Gal(L/k)\simeq D_3$,  then $F_2$,  $F_3$ satisfy condition $B$ and $|ker j_2|=|ker j_3|=2$. Furthermore,  if $F_1$ satisfies condition $B$,  then $|ker j_1|=2$ and $G\simeq S_m$,  if $F_1$ satisfies condition $A$ and $|ker j_1|=2$ then $G\simeq Q_m$. If $F_1$ satisfies condition $A$ and $|ker j_1|=4$ then $G\simeq D_m$.
 	\end{enumerate}
 	These results are summarized in the following table (see Table \ref{tablecapi}).
 	\begin{table}[H]
 		
 		$${ \begin{tabular}{c|c|c|c}
 			\hline
 			\hline
 			$|ker\; j_1|\;(A/B)$& $|ker\; j_2|\;(A/B)$ & $|ker\; j_3|\;(A/B)$& \;\;\;G\;\;\;\\
 			\hline
 			$4$ & $4$ & $4$ & $(2, 2)$\\
 			\hline
 			$2A$ & $2A$ & $2A$ & $Q_3$\\
 			\hline
 			$4$  & $2B$ & $2B$ & $D_m,  m\geq3$\\
 			\hline
 			$2A$ & $2B$ & $2B$ &  $Q_m,  m>3$\\
 			\hline
 			$2B$ & $2B$ & $2B$ &  $S_m,  m>3$\\
 			\hline
 			\hline
 			\end{tabular}}$$
 		\caption{Capitulation types}\label{tablecapi}
 	\end{table}
 \end{theorem}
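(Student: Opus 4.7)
The plan is to reformulate the statement in pure group theory via Artin's reciprocity law. The class field correspondence identifies each $\mathrm{Cl}_2(F_i)$ with $H_i/H_i'$, the basic homomorphism $j_i = j_{F_i/k}$ with the group-theoretic transfer (Verlagerung) $V_i \colon G/G' \to H_i/H_i'$, and the norm $N_{F_i/k}$ with the inclusion-induced map $H_i/H_i' \to G/G'$. Hence $\ker j_i = \ker V_i$ and the image $N_{F_i/k}(\mathrm{Cl}_2(F_i))$ corresponds to the subgroup $H_i G'/G' \subseteq G/G'$. For each of the admissible isomorphism types of $G$, the task therefore reduces to computing $\ker V_i$ and intersecting it with $H_i G'/G'$.

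For part (1), if $G \cong A$ then $G' = 1$, each $H_i$ has order $2$, and the transfer from an abelian group to a subgroup of index $n$ is the $n$-th power map; with $n = 2$ we get $V_i$ trivial, so $\ker V_i = G$ has order $4$ and $H_i \subseteq \ker V_i$ yields condition $A$. For part (2), $G \cong Q_3$ has $G' = \langle x^2\rangle$ of order $2$ and each $H_i$ is cyclic of order $4$. Using coset representatives $\{1, y\}$ for $H_1$, I would compute $V_1(\bar x) = x \cdot (y^{-1}xy) = 1$ and $V_1(\bar y) = y^2 = x^2 \neq 1$, so $\ker V_1 = \langle \bar x\rangle$, which equals $H_1 G'/G'$. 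This gives $|\ker j_1| = 2$ and condition $A$; by the symmetry of the three $H_i$ (permuted by outer automorphisms of $Q_3$) the same holds for $i = 2, 3$.

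For part (3), I would treat $D_m, Q_m, S_m$ in parallel. In every case $G' = \langle x^2\rangle$ is cyclic of order $2^{m-2}$, $H_1 = \langle x\rangle$ is cyclic, and $H_2, H_3$ are nonabelian with abelianization of type $(2,2)$. With coset representatives $\{1, y\}$ for $H_1$ one obtains $V_1(\bar x) = x \cdot (y^{-1}xy)$ and $V_1(\bar y) = y^2$, then substitutes the defining relations: $y^{-1}xy = x^{-1}$ for $D_m, Q_m$ versus $x^{2^{m-2}-1}$ for $S_m$, and $y^2 = 1$ for $D_m, S_m$ versus $y^2 = x^{2^{m-2}}$ for $Q_m$. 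Analogous computations with coset representatives $\{1, x\}$ for $H_2$ and $H_3$ show that the nontrivial element $\bar y \in H_2 G'/G'$ (resp.\ $\overline{xy} \in H_3 G'/G'$) is never killed by the relevant transfer, so condition $B$ holds for $F_2$ and $F_3$ in all three groups.

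The main obstacle is the part-(3) bookkeeping for $F_1$: one must track how the different presentations place $\bar x \in H_1 G'/G'$ into or out of $\ker V_1$ and then read off whether condition $A$ or $B$ is met. The outcome is that $y^2 = 1$ together with $y^{-1}xy = x^{-1}$ (the $D_m$ case) makes $V_1$ trivial and yields $|\ker j_1| = 4$ with condition $A$; the relation $y^2 = x^{2^{m-2}}$ in $Q_m$ cuts $\ker V_1$ down to order $2$ while preserving condition $A$; and the twisted action $y^{-1}xy = x^{2^{m-2}-1}$ in $S_m$ further breaks the intersection with $H_1 G'/G'$, giving condition $B$. Once these transfers are assembled, Table~\ref{tablecapi} follows by cross-tabulation.
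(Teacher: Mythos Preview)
The paper does not supply its own proof of this theorem: it is quoted verbatim from Kisilevsky \cite{kisilvsky} and used as a black box, so there is nothing in the paper to compare your argument against. Your plan---translating everything via Artin reciprocity into the computation of the transfers $V_i\colon G/G'\to H_i/H_i'$ and of the images $H_iG'/G'$, and then running through the presentations of $A$, $Q_m$, $D_m$, $S_m$---is exactly the standard route (and is essentially what Kisilevsky does). The individual transfer values you list for parts (1) and (2) are correct, and the case split you outline for part (3) leads to the right answers; note in particular that the implication ``$\mathrm{Gal}(L/k)\simeq Q_3\Rightarrow G\simeq Q_3$'' you use tacitly is immediate because $G/\langle x^4\rangle\simeq D_3$ whenever $G$ is $D_m$ $(m\ge 3)$, $Q_m$ $(m>3)$ or $S_m$, so $Q_3$ can only arise as the quotient when $\langle x^4\rangle=1$. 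One small point worth making explicit in your write-up: you prove the contrapositive direction (given $G$, compute $|\ker j_i|$ and the $A/B$ type), so to recover the theorem as stated you should remark that the five outcomes in Table~\ref{tablecapi} are pairwise distinct, whence the $(|\ker j_i|,A/B)$ data determine $G$ uniquely.
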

 
 By Theorem \ref{1} and  group theoretic properties quoted in the beginning of this section,  one can easily deduce the following remark.
 \begin{remark}\label{rmk 1 preliminaries}
 	The $2$-class groups of the three unramified quadratic  extensions of $k$ are cyclic if and only if  $k^{(1)}=k^{(2)}$ or  $k^{(1)}\not=k^{(2)}$ and $G\simeq Q_3$. In the other cases the $2$-class group of only one unramified quadratic  extension is cyclic and the others are of type $(2,  2)$.
 \end{remark}
 
 \begin{proposition}\label{prop the order of G_k}
 	Let $K$ be a number field and let $L$ be an abelian  unramified  $2$-extension of $K$. If $G_L=\mathrm{Gal}(L^{(2)}/L)$   is abelian,  then $L$ and $K^{(1)}$ have the same Hilbert $2$-class field
 	$($i.e. $K^{(2)}=L^{(1)})$. Furthermore,   $|G_K|=[L:K]\cdot h_2(L)$. In particular,  if the $2$-class group of $K^*$ $($i.e., the genus field of $K)$ is cyclic,  then
 	$|G_K|=[K^*:K]\cdot h_2(K^*)$.
 \end{proposition}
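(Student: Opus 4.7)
The plan is to pin down $K^{(2)}$ as equal to $L^{(1)}$ via a short chain of inclusions, and then read off the index formula from a tower of degrees. First I would observe that since $L/K$ is abelian unramified and of $2$-power degree, maximality of $K^{(1)}$ gives $L\subseteq K^{(1)}$. The extension $K^{(1)}/L$ is then unramified and, being a subextension of the abelian extension $K^{(1)}/K$, abelian, so $K^{(1)}\subseteq L^{(1)}$. Symmetrically, $L^{(1)}/L$ is abelian, so $L^{(1)}/K^{(1)}$ is abelian unramified, giving $L^{(1)}\subseteq K^{(2)}$.

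The reverse inclusion $K^{(2)}\subseteq L^{(1)}$ is the heart of the matter and the step where the hypothesis is used. Assuming $G_L$ abelian is the same as $G_L'=1$, i.e.\ $L^{(2)}=L^{(1)}$, which forces $\mathrm{Cl}_2(L^{(1)})$ to be trivial. I would first check that $L^{(1)}/K$ is Galois: $L/K$ is Galois and $L^{(1)}$ is characteristic over $L$, so any $K$-embedding of $L^{(1)}$ lands in $L^{(1)}$. Consequently $K^{(2)}/L^{(1)}$ is Galois of $2$-power degree. If it were non-trivial, its Galois group would be a non-trivial finite $2$-group, hence nilpotent with non-trivial abelianization; the corresponding fixed field would be a non-trivial unramified abelian $2$-extension of $L^{(1)}$, contradicting $\mathrm{Cl}_2(L^{(1)})=1$. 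Thus $K^{(2)}=L^{(1)}$.

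From this equality, class field theory and multiplicativity of degrees give
\[
|G_K|=[K^{(2)}:K]=[L^{(1)}:L]\cdot[L:K]=h_2(L)\cdot[L:K].
\]
For the final assertion, I would apply what has just been proved with $L=K^*$. The genus field $K^*$ is an abelian unramified $2$-extension of $K$, and the assumption that $\mathrm{Cl}_2(K^*)$ is cyclic implies (as recalled in the introduction) that $(K^*)^{(2)}=(K^*)^{(1)}$, so $G_{K^*}\simeq \mathrm{Cl}_2(K^*)$ is cyclic and in particular abelian. The general formula then yields $|G_K|=[K^*:K]\cdot h_2(K^*)$.

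The main obstacle is the reverse inclusion in the first step: the hypothesis is purely about $\mathrm{Gal}(L^{(2)}/L)$, and translating it into the non-existence of non-abelian unramified $2$-extensions of $L^{(1)}$ requires the nilpotency of finite $2$-groups together with the Galois-theoretic check that $L^{(1)}/K$ is normal. The remaining steps are formal consequences of class field theory.
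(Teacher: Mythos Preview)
Your argument is correct and structurally matches the paper's: both build the chain $K\subseteq L\subseteq K^{(1)}\subseteq L^{(1)}\subseteq K^{(2)}$ and then collapse the last inclusion using the hypothesis $L^{(1)}=L^{(2)}$. The one difference is in how you obtain $K^{(2)}\subseteq L^{(1)}$. The paper simply carries the same pattern one step further: $K^{(2)}/L^{(1)}$ is a subextension of the abelian unramified extension $K^{(2)}/K^{(1)}$, so $K^{(2)}\subseteq L^{(2)}=L^{(1)}$ and the squeeze is immediate. Your detour through nilpotency of $2$-groups and the normality check for $L^{(1)}/K$ is valid but heavier than needed, since $K^{(2)}/L^{(1)}$ is already abelian by construction; on the other hand, your version makes explicit some points (normality of $L^{(1)}/K$, why the inclusions hold) that the paper's one-line proof leaves to the reader.
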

 \begin{proof}The fact that $L/K$ is unramified   implies that
 	$K\subset L\subset {K}^{(1)}\subset {L}^{(1)}\subset {K}^{(2)}\subset {L}^{(2)}.$
 	Since $G_L$ is abelian,  we deduce that ${L}^{(1)}={L}^{(2)}$. So $K^{(2)}=L^{(1)}$, hence the first equality. As the $2$-class group of  $K^*$  is cyclic, so the $2$-class field tower of $K$ terminates at the first step, which completes the proof.
 \end{proof}
 
 \section{\textbf{$2$-class groups of some multiquadratic number fields}}\label{section 01}
 Let $d$ be an odd positive square-free integer and $\zeta_8$  a primitive $8$-th root of unity. Note that the triquadratic field $L_d=\mathbb{Q}(\zeta_8,\sqrt{d})$  is the first step of
 both the cyclotomic $\mathbb{Z}_2$-extension and a non-cyclotomic $\mathbb{Z}_2$-extension (e.g. the non-cyclotomic  $\mathbb{Z}_2$-extension of
 $\mathbb{Q}(\sqrt{-1})$ (cf. \cite{Hubbard}) shifted by $\sqrt{d}$) of the biquadratic field
 $\mathbb{Q}(\sqrt{-1},\sqrt{d})$. This fact makes it of a particular importance from the triquadratic number fields.
 In this subsection  and the next section, we will make some preparations on the $2$-class groups and unit groups of some multiquadratic number fields that will help to study the $2$-class field towers    and the second $2$-class groups of all   fields  $L_d=\mathbb{Q}(\zeta_8,\sqrt{d})$, such that
 $2$-class group is of type $(2, 2)$,  together with some different families of mutiquadratic number fields in the last section of this paper.
 For this, we recall  the following results of our earlier paper \cite[Theorem 5.7]{chemsZkhnin1}.
 Let $p$, $p'$ and $q$ be three  prime integers. The $2$-class group of $L_d=\QQ(\sqrt d, \zeta_8)$ is of type $(2, 2)$ if and only if $d$ takes one of the following forms:
 \begin{eqnarray}\label{cond 1}
 d=pq \text{ with } p\equiv -q\equiv 1\pmod 4, 	\left(\dfrac{2}{p}\right)=-1,  \left(\dfrac{2}{q}\right)=1 \text{ and }  \left(\dfrac{p}{q}\right)=-1.
 \end{eqnarray}
 \begin{eqnarray}\label{cond 2}
 d=pq \text{ with } p\equiv q\equiv -1\pmod 4, 	\left(\dfrac{2}{p}\right)=-1,  \left(\dfrac{2}{q}\right)=1 \text{ and }  \left(\dfrac{p}{q}\right)=-1 ,
 \end{eqnarray}
 \begin{eqnarray}\label{cond 3}
 d=p'\text{ with } p'\equiv 1 \pmod{16}  \text{ and }  \left(\frac{2}{p'}\right)_4\not=\left(\frac{p'}{2}\right)_4 .
 \end{eqnarray}
 
 \noindent Consider the following notations
 \begin{enumerate}[\rm1.]
 	\item  $L_{pq}=\mathbb{Q}(\sqrt2, \sqrt{qp}, \sqrt{-1}) $.
 	\item $L_{pq}^*=\QQ(\sqrt 2,  \sqrt{p},  \sqrt{q},  \sqrt{-1})$  the   genus field of $L_{pq}$.
 	\item    $L_{p'}=\mathbb{Q}(\sqrt2, \sqrt{p'}, \sqrt{-1}) $.
 	\item $F_{pq}=\QQ(\sqrt{2p},  \sqrt{2q}, \sqrt{-2})$ or  $\QQ(\sqrt{p},  \sqrt{2q}, \sqrt{-2})$,  according to whether   $p$ and $q$ verify conditions {\eqref{cond 1}} or   {\eqref{cond 2}} .
 	\item $K_{pq}=\QQ(\sqrt{p},  \sqrt{q}, \sqrt{-2})$ or  $\QQ(\sqrt{q},  \sqrt{2p}, \sqrt{-2})$,  according to whether   $p$ and $q$ verify conditions {\eqref{cond 1}} or  {\eqref{cond 2}}.
 	\item $k_{pq}=\mathbb{Q}(\sqrt{-2}, \sqrt{pq})$ or $\mathbb{Q}(\sqrt{-2}, \sqrt{2pq})$ according to whether $p$ and $q$ verify conditions {\eqref{cond 1}} or  {\eqref{cond 2}}.
 	\item $m\geq 2$ the positive integer satisfying  $h_2(-2q)=2^m$ (cf. \cite{connor88}).
 \end{enumerate}
 
 \noindent Let us start with some lemmas that we shall use in what follows.
 \begin{lemma}[\cite{azizunints99}]\label{Lemme azizi} Let $K_0$ be a real number field, $K=K_0(i)$ a quadratic extension of $K_0$, $n\geq 2$ be an integer and $\xi_n$ a $2^n$-th primitive root of unity, then
 	$	\xi_n=\frac{1}{2}(\mu_n+\lambda_ni)$, where $\mu_n=\sqrt{2+\mu_{n-1}}$, $\lambda_n=\sqrt{2-\mu_{n-1}}$, $\mu_2=0$, $\lambda_2=2$ and $\mu_3=\lambda_3=\sqrt{2}$. Let $n_0$ be the greatest
 	integer such that $\xi_{n_0}$ is contained in $K$, $\{\varepsilon_1, ..., \varepsilon_r\}$ a fundamental system of units of $K_0$ and $\varepsilon$ a unit of $K_0$ such that
 	$(2+\mu_{n_0})\varepsilon$ is a square in $K_0$(if it exists). Then a fundamental system of units of $K$ is one of the following systems:
 	\begin{enumerate}[\rm 1.]
 		\item $\{\varepsilon_1, ..., \varepsilon_{r-1}, \sqrt{\xi_{n_0}\varepsilon } \}$ if $\varepsilon$ exists,  in this case $\varepsilon=\varepsilon_1^{j_1}...\varepsilon_{r-1}^{j_1}\varepsilon_r$,
 		where $j_i\in \{0, 1\}$.
 		\item $\{\varepsilon_1, ..., \varepsilon_r \}$ elsewhere.
 		
 	\end{enumerate}
 \end{lemma}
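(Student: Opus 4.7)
The plan is to compute the index $[E_K:\mu(K)E_{K_0}]$, where $\mu(K)=\langle\xi_{n_0}\rangle$ is the torsion of $E_K$, and then pin down the combinatorial form of a generator of the non-trivial coset when the index is $2$.

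First, I would observe that $K_0$ is totally real with, say, $r+1$ real embeddings, while $K=K_0(i)$ is totally imaginary with $r+1$ pairs of complex embeddings; hence by Dirichlet both $E_{K_0}$ and $E_K$ have the same free rank $r$, so $E_{K_0}$ sits in $E_K/\mu(K)$ as a subgroup of finite index. Because $K/K_0$ is CM-like (generated by $i$), I would invoke the standard argument: the map
\[
\varphi:E_K\longrightarrow\mu(K),\qquad u\longmapsto u/\bar u,
\]
where bar denotes the non-trivial automorphism of $K/K_0$, is a homomorphism whose kernel is exactly $E_{K_0}$ and whose image is a subgroup of $\mu(K)$. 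Since for any $u\in E_K$ one has $\varphi(u)^2=\varphi(u^2)$ and $u^2/\overline{u^2}\in \varphi(\mu(K)E_{K_0})$, the cokernel of $\mu(K)E_{K_0}\hookrightarrow E_K$ has exponent dividing $2$, and counting ranks forces
\[
[E_K:\mu(K)E_{K_0}]\in\{1,2\}.
\]

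The next step is to convert the case of index $2$ into the existence of $\varepsilon\in E_{K_0}$ such that $\xi_{n_0}\varepsilon$ is a square in $K$. Indeed, if $\eta\in E_K$ represents the non-trivial coset, then $\eta^2=\xi_{n_0}^{j}\varepsilon'$ for some $\varepsilon'\in E_{K_0}$ and some integer $j$; the maximality of $n_0$ prevents $\sqrt{\xi_{n_0}}$ from lying in $K$, so $j$ must be odd, and after adjusting by an element of $\mu(K)E_{K_0}$ I may assume $j=1$. Now the key algebraic identity
\[
\xi_{n_0}(2+\mu_{n_0})=\bigl(\mu_{n_0+1}\xi_{n_0+1}\bigr)^2,\qquad \mu_{n_0+1}\xi_{n_0+1}=\tfrac{1}{2}\bigl((2+\mu_{n_0})+\lambda_{n_0}\,i\bigr)\in K,
\]
shows that $\xi_{n_0}\varepsilon$ is a square in $K$ if and only if $(2+\mu_{n_0})\varepsilon$ is a square in $K$; and because $(2+\mu_{n_0})\varepsilon\in K_0$ with $2+\mu_{n_0}$ totally positive, being a square in $K=K_0(i)$ is equivalent to being a square in $K_0$ (if $(b+ci)^2\in K_0$ is totally positive, then $2bc=0$ forces $c=0$). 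This establishes the criterion stated in the lemma.

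Finally, to obtain the stated normal form $\varepsilon=\varepsilon_1^{j_1}\cdots\varepsilon_{r-1}^{j_{r-1}}\varepsilon_r$ with $j_i\in\{0,1\}$, I note that multiplying $\varepsilon$ by a square of a unit does not change the property $(2+\mu_{n_0})\varepsilon\in K_0^{\times 2}$, so each exponent may be reduced modulo $2$; moreover $\varepsilon$ itself is not a square in $K_0$ (otherwise $\sqrt{\xi_{n_0}\varepsilon}$ would be a root of unity times a unit of $K_0$, giving index $1$), so some exponent is odd, and after relabelling the $\varepsilon_i$ I may assume it is the one on $\varepsilon_r$. A direct check that $\{\varepsilon_1,\ldots,\varepsilon_{r-1},\sqrt{\xi_{n_0}\varepsilon}\}$ together with $\xi_{n_0}$ generates $E_K$ (since $\varepsilon_r$ is recovered from $(\sqrt{\xi_{n_0}\varepsilon})^2$) then completes the proof. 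The main obstacle is the index estimate of Step 2: one has to handle the CM-type argument for the homomorphism $u\mapsto u/\bar u$ with enough care to rule out index larger than $2$, and Step 4 requires the precise algebraic identity above, which is the technical heart of the lemma.
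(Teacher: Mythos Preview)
The paper does not supply a proof of this lemma; it is quoted verbatim from \cite{azizunints99}, so there is no in-paper argument to compare against. Your approach---computing the Hasse unit index $[E_K:\mu(K)E_{K_0}]\in\{1,2\}$ via the homomorphism $u\mapsto u/\bar u$, and then identifying the non-trivial coset through the identity $\xi_{n_0}(2+\mu_{n_0})=(\mu_{n_0+1}\xi_{n_0+1})^2$---is the standard one for CM extensions and is almost certainly the method of the cited reference as well.

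One step does not work as written. In descending from ``$(2+\mu_{n_0})\varepsilon$ is a square in $K$'' to ``a square in $K_0$'', you argue that $(b+ci)^2\in K_0$ being totally positive forces $c=0$. But only $2+\mu_{n_0}$ is totally positive; the unit $\varepsilon$ need not be, so the product $(2+\mu_{n_0})\varepsilon$ need not be totally positive and the case $b=0$ is not excluded. The repair is easy: from $2bc=0$ one gets either $c=0$ (done) or $b=0$, in which case $(2+\mu_{n_0})\varepsilon=-c^2$, so $(2+\mu_{n_0})(-\varepsilon)=c^2$ is a square in $K_0$; since $-1=i^2\in K^{\times 2}$, replacing $\varepsilon$ by $-\varepsilon$ leaves $\sqrt{\xi_{n_0}\varepsilon}\in K$ unchanged up to a root of unity, and the conclusion of the lemma is unaffected. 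With this adjustment your argument is complete.
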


 \begin{lemma}\label{lemma units of biquadratic fields}
 	Let $p$ and $q$ be  two primes satisfying   conditions   {\eqref{cond 1}}. Then,
 	\begin{enumerate}[\rm 1.]	
 		\item A FSU of $\QQ(\sqrt{p}, \sqrt{q})$ is $\{\varepsilon_{p},  \varepsilon_{q},  \sqrt{\varepsilon_{q}\varepsilon_{pq}}\}.$
 		\item  A FSU of  $\QQ(\sqrt{p}, \sqrt{2q})$ is   $\{\varepsilon_{p},  \varepsilon_{2q},  \sqrt{\varepsilon_{2pq}}\}$.
 	\end{enumerate}
 \end{lemma}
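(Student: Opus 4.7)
The strategy is to apply the classical Kuroda--Kubota procedure for real biquadratic number fields. Recall that for $K=\mathbb{Q}(\sqrt a,\sqrt b)$ with three quadratic subfields and fundamental units $\varepsilon_1,\varepsilon_2,\varepsilon_3$, Kuroda's theorem gives $[E_K:\langle -1,\varepsilon_1,\varepsilon_2,\varepsilon_3\rangle]\in\{1,2,4\}$, and the index exceeds $1$ precisely when at least one nontrivial product $\varepsilon_1^i\varepsilon_2^j\varepsilon_3^k$ with $(i,j,k)\in\{0,1\}^3\setminus\{(0,0,0)\}$ becomes a square in $K$. Both parts of the lemma thus reduce to identifying \emph{which} product admits a square root in the relevant biquadratic field.

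Before starting, I would pin down the data modulo $8$. Under conditions \eqref{cond 1}, $p\equiv 1\pmod 4$ together with $\left(\frac{2}{p}\right)=-1$ forces $p\equiv 5\pmod 8$, while $q\equiv -1\pmod 4$ with $\left(\frac{2}{q}\right)=1$ forces $q\equiv 7\pmod 8$. Consequently $N(\varepsilon_p)=-1$ (since $p$ is a prime $\equiv 1\pmod 4$), whereas $N(\varepsilon_q)=N(\varepsilon_{pq})=N(\varepsilon_{2q})=N(\varepsilon_{2pq})=+1$ because each of those radicands contains a prime factor $\equiv 3\pmod 4$. In particular no product involving $\varepsilon_p$ to an odd power can be a square in $K$ or $K'$, which immediately cuts the seven candidate products down to three in each case: $\varepsilon_q$, $\varepsilon_{pq}$, $\varepsilon_q\varepsilon_{pq}$ for (1), and $\varepsilon_{2q}$, $\varepsilon_{2pq}$, $\varepsilon_{2q}\varepsilon_{2pq}$ for (2).

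For (1), I would verify explicitly that $\varepsilon_q\varepsilon_{pq}$ is a square in $K=\mathbb{Q}(\sqrt p,\sqrt q)$. The standard device is to write each $\varepsilon_d=(x_d+y_d\sqrt d)/2$ with $x_d^2-dy_d^2=4$ (the sign is $+$ because $N(\varepsilon_d)=+1$ here), use the resulting restricted factorizations of $x_d\pm 2$, and combine the two representations to produce an element $\eta\in K$ with $\eta^2=\varepsilon_q\varepsilon_{pq}$; the hypothesis $\left(\frac{p}{q}\right)=-1$ is exactly what aligns the two factorizations. One then rules out $\varepsilon_q$ and $\varepsilon_{pq}$ individually: a classical descent argument shows that $\varepsilon_q$ is a square in $K$ iff $p\varepsilon_q$ is a square in $\mathbb{Q}(\sqrt q)$, which fails by a direct Legendre-symbol computation using $\left(\frac{p}{q}\right)=-1$, and similarly for $\varepsilon_{pq}$. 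Part (2) is handled identically, with the active product now $\varepsilon_{2pq}$: the same reciprocity data, applied to $\mathbb{Q}(\sqrt p)$ and $\mathbb{Q}(\sqrt{2q})$, make $\varepsilon_{2pq}$ a square in $K'=\mathbb{Q}(\sqrt p,\sqrt{2q})$, while $\varepsilon_{2q}$ and $\varepsilon_{2q}\varepsilon_{2pq}$ fail to be.

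The main obstacle is producing the explicit $\eta$ with $\eta^2=\varepsilon_q\varepsilon_{pq}$ (respectively with $\eta^2=\varepsilon_{2pq}$) and correctly threading the reciprocity hypothesis $\left(\frac{p}{q}\right)=-1$ through the computation. Once these square roots are exhibited, Kuroda's bound combined with the exclusion of the remaining candidates pins the unit indices of $K$ and $K'$ down to exactly $2$, and the fundamental systems of units claimed in the lemma follow at once.
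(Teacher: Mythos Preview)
Your approach is essentially the paper's: both run Wada's (Kuroda--Kubota) algorithm, write $\varepsilon_{pq}=x+y\sqrt{pq}$ with $x^2-1=pqy^2$, split into the four possible factorizations of $x\pm 1$, and eliminate all but one via the Legendre-symbol hypotheses, yielding $2\varepsilon_{pq}=(y_1\sqrt p+y_2\sqrt q)^2$; the same computation gives $2\varepsilon_q$ a square in $\mathbb{Q}(\sqrt q)$, whence $\varepsilon_q\varepsilon_{pq}$ is a square in $K$. The only real difference is the exclusion step: instead of your descent (``$\varepsilon_q$ square in $K$ iff $p\varepsilon_q$ square in $\mathbb{Q}(\sqrt q)$''), the paper simply observes that since $2\varepsilon_q$ and $2\varepsilon_{pq}$ are squares in $K$ but $\sqrt 2\notin K$, neither $\varepsilon_q$ nor $\varepsilon_{pq}$ alone can be a square---a one-line shortcut worth noting.
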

 \begin{proof} To prove this lemma,  we will use the algorithm described in  \cite{wada}.
 	Let $\varepsilon_{pq}=x+y\sqrt{pq}$ for some integers $x$ and $y$. Since $N(\varepsilon_{pq})=1$,  then $x^2-1=y^2pq$. Hence by     the unique  factorization in $\ZZ$  there exist $y_1$,  $y_2$ in $\ZZ$ such that
 	$$(1):\ \left\{ \begin{array}{ll}
 	x\pm1=y_1^2\\
 	x\mp1=pqy_2^2,
 	\end{array}\right. \quad
 	(2):\ \left\{ \begin{array}{ll}
 	x\pm1=py_1^2\\
 	x\mp1=qy_2^2,
 	\end{array}\right.\quad
 	(3):\ \left\{ \begin{array}{ll}
 	x\pm1=2py_1^2\\
 	x\mp1=2qy_2^2,
 	\end{array}\right. \quad
 	\text{ or }\quad
 	(4):\  \left\{ \begin{array}{ll}
 	x\pm1=2y_1^2\\
 	x\mp1=2pqy_2^2,
 	\end{array}\right.
 	$$
 	Note that   $y=y_1y_2$  or  $y=2y_1y_2$.
 	\begin{enumerate}[\rm$\bullet$]
 		\item System $(1)$ implies 
 		$1=\left(\frac{y_1^2}{p}\right)=\left(\frac{x\pm 1}{p}\right)=\left(\frac{x\mp1\pm 2}{p}\right) =\left(\frac{2}{p}\right)=-1.$ So this case is impossible.
 		\item System $(3)$ implies 
 		$\left(\frac{2q}{p}\right)=\left(\frac{x\mp 1}{p}\right)=\left(\frac{\pm2}{p}\right)=\left(\frac{2}{p}\right)=-1$, which contradicts the fact that  $\left(\dfrac{p}{q}\right)=-1$.
 		\item If system $(4)$ holds, then  $2\varepsilon_{ pq}=2(x+y\sqrt{pq})=2(y_1+y_2\sqrt{pq})^2$. Thus $\varepsilon_{ pq}$ is a square in
 		$\mathbb{Q}(\sqrt{pq})$ which is absurd.
 		
 		\item Suppose that $\left\{ \begin{array}{ll}
 		x+1=py_1^2\\
 		x-1=qy_2^2,
 		\end{array}\right.$
 		then, 	$-1=\left(\frac{p}{q}\right)=\left(\frac{x+ 1}{q}\right)=\left(\frac{x-1+ 2}{q}\right)=\left(\frac{2}{q}\right)=1.$ Which is impossible too.
 	\end{enumerate}
 	From the above discussion, we infer that $\left\{ \begin{array}{ll}
 	x-1=py_1^2\\
 	x+1=qy_2^2.
 	\end{array}\right.$
 	So $ 2\varepsilon_{pq} =(\ y_1\sqrt{p}+ y_2\sqrt{q})^2$. Therefore $2\varepsilon_{pq}$ is a square in $\QQ(\sqrt{p}, \sqrt{q})$.
 	
 	Using similar techniques,  one can show that $ 2\varepsilon_{ q}$ is  a square in $\QQ(\sqrt{p}, \sqrt{q})$. Hence,
 	$\varepsilon_{pq}$ (resp. $\varepsilon_{ q}$) is not a square in  $\QQ(\sqrt{p}, \sqrt{q})$,  since otherwise we will get $\sqrt{2}\in \QQ(\sqrt{p}, \sqrt{q})$,  which is not true.
 	As $\varepsilon_{p}$ has  norm $-1$,  it follows that  $\sqrt{\varepsilon_{q}\varepsilon_{pq}}$ is the  only element of $\{\varepsilon_{p}^i\varepsilon_{q}^j\varepsilon_{pq}^k: i,  j \text{ and } k\in\{0, 1\}\}$ which is  a square in $\QQ(\sqrt{p}, \sqrt{q})$.
 	\;So the first item (cf. \cite{wada}). We similarly prove the second item.
 \end{proof}
 
 \begin{remark}\label{rmk1 on units}
 	By the previous proof,  we have $\varepsilon_{ pq}$ is a square in $\QQ(\sqrt 2,  \sqrt{p},  \sqrt{q})$. One can similarly show that if $q\equiv 3 \pmod 4$,  then $2{\varepsilon_{ q}}\ ($resp. $2{\varepsilon_{ 2q}})$  is a square in $\QQ(\sqrt{q})\  ($resp. $\QQ(\sqrt{2q}))$.
 \end{remark}

 \begin{lemma}\label{2}
 	Let $p$ and $q$ be two primes satisfying   conditions  {\eqref{cond 1}} or $(\ref{cond 2})$. Then the class number of $F=\mathbb{Q}(\sqrt{q},  \sqrt{p},  i)$ is odd.
 \end{lemma}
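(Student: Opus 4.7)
The plan is to apply Kuroda's class number formula for the totally imaginary triquadratic field $F = \mathbb{Q}(\sqrt{p}, \sqrt{q}, i)$. This formula expresses $h_2(F)$ as the product of the $2$-class numbers of its seven quadratic subfields, multiplied by the unit index $q(F) = [E_F : \prod_{k \subset F} E_k]$ and divided by a $2$-power that depends only on the (fixed) signature of $F$. Hence showing that $h_2(F)$ is odd amounts to matching the $2$-adic valuations of $q(F)$ and $\prod_k h_2(k)$.

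First, I would compute $h_2(k)$ for each of the seven quadratic subfields $\mathbb{Q}(\sqrt p)$, $\mathbb{Q}(\sqrt q)$, $\mathbb{Q}(\sqrt{pq})$, $\mathbb{Q}(i)$, $\mathbb{Q}(\sqrt{-p})$, $\mathbb{Q}(\sqrt{-q})$, $\mathbb{Q}(\sqrt{-pq})$. Classical genus theory (i.e., $2$-ranks read off from the number of ramified primes together with the sign of the norm of the fundamental unit in the real case), combined with R\'edei's $4$-rank theorem, determines each $h_2(k)$ from the reciprocity data of conditions \eqref{cond 1} or \eqref{cond 2}; in particular, $p \equiv 5 \pmod 8$ or $p \equiv 3 \pmod 8$, $q \equiv 7 \pmod 8$, and the Legendre symbols $\left(\tfrac{p}{q}\right) = -1$, $\left(\tfrac{2}{p}\right) = -1$, $\left(\tfrac{2}{q}\right) = 1$ force each $h_2(k) \in \{1, 2\}$, so the product $\prod_k h_2(k)$ becomes a specific small $2$-power.

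Next, I would compute $q(F)$ by combining Lemma \ref{lemma units of biquadratic fields} and Remark \ref{rmk1 on units} (describing fundamental systems of units of the real biquadratic subfields of $F^+ = \mathbb{Q}(\sqrt p, \sqrt q)$) with Lemma \ref{Lemme azizi} applied in turn to the three CM biquadratic subfields $\mathbb{Q}(\sqrt p, i)$, $\mathbb{Q}(\sqrt q, i)$, $\mathbb{Q}(\sqrt{pq}, i)$. This pins down exactly which products of fundamental subfield units become squares in $F$; each such new square root contributes a factor of $2$ to $q(F)$. Substituting the resulting values of $q(F)$ and $\prod_k h_2(k)$ into Kuroda's formula then yields $h_2(F) = 1$. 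Condition \eqref{cond 2} is handled along identical lines with only minor numerical modifications.

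The main obstacle is the precise determination of $q(F)$, because combinations of units coming from different subfields can interact in subtle ways to produce new squares in $F$; this requires a careful case-by-case analysis of unit squaring in the biquadratic subfields, which is precisely what Lemma \ref{lemma units of biquadratic fields}, Remark \ref{rmk1 on units}, and Lemma \ref{Lemme azizi} are designed to streamline.
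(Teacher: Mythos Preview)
Your strategy is essentially the paper's: apply Wada's (Kuroda's) class number formula $h_2(F)=2^{-5}q(F)\prod_k h_2(k)$, compute the seven quadratic $2$-class numbers from the reciprocity data, and then determine $q(F)$ via the unit lemmas. Two small points deserve correction, however.

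First, your blanket claim that each $h_2(k)\in\{1,2\}$ is false under condition~\eqref{cond 2}: there $p\equiv q\equiv 3\pmod 4$ forces $h_2(-pq)=4$ (three ramified primes in $\mathbb{Q}(\sqrt{-pq})$ and trivial $4$-rank), so the product is $4$ rather than $8$ and one needs $q(F)=2^3$, not $2^2$. This is exactly the ``minor numerical modification'' you allude to, but it matters for the unit computation.

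Second, your plan to obtain $q(F)$ by applying Lemma~\ref{Lemme azizi} \emph{in turn to the three CM biquadratic subfields} is a detour: knowing FSUs of $\mathbb{Q}(\sqrt p,i)$, $\mathbb{Q}(\sqrt q,i)$, $\mathbb{Q}(\sqrt{pq},i)$ does not directly yield a FSU of $F$. The paper instead applies Lemma~\ref{Lemme azizi} \emph{once} to the CM extension $F/F^+$, using the FSU of $F^+=\mathbb{Q}(\sqrt p,\sqrt q)$ already supplied by Lemma~\ref{lemma units of biquadratic fields}. In case~\eqref{cond 1} this immediately gives the FSU $\{\varepsilon_p,\sqrt{\varepsilon_q\varepsilon_{pq}},\sqrt{i\varepsilon_q}\}$ of $F$, hence $q(F)=4$. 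In case~\eqref{cond 2} the paper supplements this with the integrality constraint $q(F)/8\in\mathbb{N}$ to pin down $q(F)=8$ without fully resolving the ambiguity in the FSU of $F^+$; this trick is worth incorporating into your argument.
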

 \begin{proof}
 	For  values of class numbers of  quadratic number fields used below  one can see   \cite{connor88,  kaplan76}.
 	\begin{enumerate}[\rm $\bullet$]
 		\item If $p$ and $q$ satisfy  conditions  {\eqref{cond 1}}, then by class number formula (cf. \cite{wada}) we have
 		\begin{eqnarray*}
 			h_2(F)&=&\frac{1}{2^5}q(F)h_2(p)h_2(q)h_2(-p)h_2(-q)h_2(pq)h_2(-pq)h_2(-1),  \\
 			&=&\frac{1}{2^5}q(F)\cdot  1\cdot 1\cdot 2\cdot1 \cdot 2 \cdot 2 \cdot 1=\frac{1}{2^2}q(F).
 		\end{eqnarray*}
 		Since,   by Lemma \ref{lemma units of biquadratic fields},   a FSU of $F^+=\mathbb{Q}(\sqrt{q},  \sqrt{p})$ is given by    $\{\varepsilon_{p},   \varepsilon_{q},   \sqrt{\varepsilon_{q}\varepsilon_{pq}}\}$
 		and $\sqrt{2\varepsilon_{q}}$ is a square in $F^+$,   then by Lemma \ref{Lemme azizi},  $\{\varepsilon_{p},   \sqrt{\varepsilon_{q}\varepsilon_{pq}},    \sqrt{i\varepsilon_{q}}\}$
 		is a FSU of $F$. Thus $h_2(F)=\frac{1}{2^2}\cdot 4=1$.
 		\item If $p$ and $q$  satisfy   conditions $ {\eqref{cond 2}}$, then as above  we get
 		\begin{eqnarray*}
 			h_2(F)&=&\frac{1}{2^5}q(F)h_2(p)h_2(q)h_2(-p)h_2(-q)h_2(pq)h_2(-pq)h_2(-1),  \\
 			&=&\frac{1}{2^5}q(F)\cdot  1\cdot 1\cdot 1\cdot1 \cdot 1 \cdot 4 \cdot 1=\frac{1}{2^3}q(F).
 		\end{eqnarray*}
 		By Remark \ref{rmk1 on units}  we have  $ \sqrt{2\varepsilon_{q}}$,   $\sqrt{2\varepsilon_{p}}\in F^+$,   so   it is easy to see that a FSU of $F^+$ is
 		$\{\varepsilon_{q},   \sqrt{\varepsilon_{p}\varepsilon_{q}},   \sqrt{\varepsilon_{p}\varepsilon_{pq}} \text{ or } \sqrt{\varepsilon_{pq}}\}$ or
 		$\{\varepsilon_{q},    {\varepsilon_{pq}},   \sqrt{\varepsilon_{p}\varepsilon_{q}}\}$. Thus,  by Lemma \ref{Lemme azizi} and the  fact that $\frac{1}{2^3}q(F)\in \mathbb{N}$,   we have $q(F)=2^3$  (and thus  $\{\varepsilon_{q},    {\varepsilon_{pq}},   \sqrt{\varepsilon_{p}\varepsilon_{q}}\}$ is not a FSU of $F^+$).  Therefore   $h(F)$ is odd.
 	\end{enumerate}
 \end{proof}
 
 \begin{theorem}\label{thm 2-class group of the 3 triquad}
 	Let $p$ and $q$ be two primes    satisfying    conditions {\eqref{cond 1}} or $(\ref{cond 2})$. Then we have
 	\begin{enumerate}[\rm 1.]
 		\item The $2$-class group of $L_{pq}$ is of type $(2,  2)$.
 		\item The $2$-class group of $F_{pq}$ is of type $(2,  2)$.
 		\item The $2$-class group of $K_{pq}$ is cyclic of type $\ZZ/2^{m+1}\ZZ$,   	with $h_2(-2q)=2^m$.
 	\end{enumerate}
 	
 \end{theorem}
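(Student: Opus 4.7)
The plan is to handle the three parts uniformly: determine the $2$-rank of each class group via genus theory, determine its $2$-order via a Kuroda--Wada class number formula for multiquadratic fields, and then combine the two to read off the group structure. All three fields $L_{pq}$, $F_{pq}$, $K_{pq}$ are triquadratic with seven quadratic subfields whose $2$-class numbers under \eqref{cond 1}/\eqref{cond 2} are controlled by standard tables (\cite{connor88, kaplan76}); in particular $h_2(-2q)=2^m$ is our normalization.

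For the $2$-rank, I would apply the ambiguous class number formula to a well-chosen quadratic step, for instance $L_{pq}/\mathbb{Q}(\sqrt{2},\sqrt{pq})$ for Part~(1) and analogous choices for Parts~(2) and~(3). The residue conditions \eqref{cond 1}/\eqref{cond 2} control the decomposition of $2$, $p$ and $q$ in the biquadratic base, the number of primes ramifying in the top quadratic step, and whether the fundamental unit of the base is a norm. Working through these should force the $2$-rank to equal exactly $2$ for $L_{pq}$ and $F_{pq}$, and exactly $1$ (i.e.\ cyclic) for $K_{pq}$.

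For the $2$-order, I would invoke the class number formula
\[
h_2(M)=\frac{q(M)}{2^{v(M)}}\prod_{k\subset M}h_2(k),
\]
with $k$ ranging over the seven quadratic subfields of $M\in\{L_{pq},F_{pq},K_{pq}\}$ and $v(M)$ the usual signature-dependent normalizer. Inserting the known $2$-parts of the $h(k)$ reduces each claim to computing the unit index $q(M)$.

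Computing these unit indices will be the main obstacle. The plan is to produce an FSU of $M$ by starting from an FSU of its maximal real (or, in the imaginary case, maximal CM-minus) biquadratic subfield provided by Lemma~\ref{lemma units of biquadratic fields} together with Remark~\ref{rmk1 on units}, and then lifting through the final quadratic step via Lemma~\ref{Lemme azizi}. The proof of Lemma~\ref{2} already carries out exactly this bookkeeping for the auxiliary field $\mathbb{Q}(\sqrt{p},\sqrt{q},i)$; the same $2$-adic square-detection argument, tracking which products of fundamental units become squares in $M$ under \eqref{cond 1}/\eqref{cond 2}, should give $h_2(L_{pq})=h_2(F_{pq})=4$ and $h_2(K_{pq})=2^{m+1}$. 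Combined with the rank computation, this forces $\mathrm{Cl}_2(L_{pq})\simeq\mathrm{Cl}_2(F_{pq})\simeq(2,2)$ and $\mathrm{Cl}_2(K_{pq})\simeq \mathbb{Z}/2^{m+1}\mathbb{Z}$.
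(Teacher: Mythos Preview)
Your plan is viable in principle but takes a different, more laborious route than the paper. The paper's key shortcut is to observe that $L_{pq}$, $F_{pq}$, and $K_{pq}$ are precisely the three unramified quadratic extensions of the biquadratic field $k_{pq}$, whose $2$-class group is of type $(2,2)$ by \cite{Be05}. Remark~\ref{rmk 1 preliminaries} (the group-theoretic trichotomy for the three unramified quadratic extensions of a $(2,2)$ base) then ties all three structures together: once $\mathrm{Cl}_2(L_{pq})\simeq(2,2)$ is quoted from \cite{chemsZkhnin1} and a \emph{single} ambiguous-class computation over $k'=\QQ(\sqrt p,\sqrt q)$ (odd class number, by Lemma~\ref{lemma units of biquadratic fields} and Kuroda) shows $\mathrm{rank}\,\mathrm{Cl}_2(K_{pq})\leq 1$, the structure of $F_{pq}$ drops out for free. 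Only the order $h_2(K_{pq})$ then requires a genuine class-number-formula step, and there the unit index $q(K_{pq})=4$ is pulled from \cite{Be05} rather than recomputed. Your approach, by contrast, proposes to compute rank and unit index independently for each of the three fields.

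One concrete caution on your rank step: the example base you suggest for Part~(1), $\QQ(\sqrt 2,\sqrt{pq})$, has $h_2=2$ under \eqref{cond 1} (its FSU is $\{\varepsilon_2,\varepsilon_{pq},\sqrt{\varepsilon_{pq}\varepsilon_{2pq}}\}$, so $q=2$, and Kuroda gives $\tfrac{2}{4}\cdot 1\cdot 2\cdot 2=2$). Over a base with even $2$-class number the ambiguous class number formula does not directly hand you the $2$-rank of the top field, so you would need to select your quadratic step more carefully---exactly as the paper does by working over $\QQ(\sqrt p,\sqrt q)$ for $K_{pq}$. Finding such odd-class-number bases for $F_{pq}$ independently is not automatic either, which is part of why the paper's detour through $k_{pq}$ and Remark~\ref{rmk 1 preliminaries} is the cleaner path.
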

 \begin{proof} Assume firstly that $p$ and $q$ verify  conditions \eqref{cond 1}. So, by \cite{chemsZkhnin1},  the $2$-class group of $L_{pq}$ is of type $(2,  2)$. Under this assumption,   $h_2(p)=h_2(q)=h_2(-2)=1$,
 	$h_2(-2p)=h_2(pq)=2$ and $h_2(-2pq)=4$ (cf.\cite{connor88,  kaplan76}).
 	We claim that the class number of $k'=\QQ(\sqrt{p},   \sqrt{q})$ is odd. In fact,  we have
 	$h_2(p)=h_2(q) =1$ and
 	$ h_2(pq) =2$  (cf. \cite{connor88,  kaplan76}).
 	So Lemma \ref{lemma units of biquadratic fields} and Kuroda's class number formula (cf. \cite{lemmermeyer1994kuroda}) imply that
 	$h_2(k')=\frac{1}{4}q(k')h_2(p)h_2(q)h_2(pq)=1$.
 	Since there are only two primes
 	which ramify in  $K_{pq}/k'$,   then by  ambiguous class number formula  (cf. \cite{Gr}) the rank of the $2$-class group of $K_{pq}$ equals
 	$2-1-e=1-e$,  where $e$ is an integer defined by
 	$[E_{k'}: E_{k'}\cap N_{K_{pq}/k'}(K_{pq})]=2^e$. We infer that the rank of the  $2$-class group of $K_{pq}$ can not be equal to $2$.
 	On the other hand,   note that $L_{pq}$,   $F_{pq}$ and $K_{pq}$ are the three unramified quadratic extensions of $k_{pq}=\mathbb{Q}(\sqrt{-2},  \sqrt{pq})$ that having a $2$-class group  of  type $(2,  2)$ (cf. \cite[Théorème 1]{Be05}). Then,  by Remark \ref{rmk 1 preliminaries},  the $2$-class group  of $F_{pq}$ is of type $(2,  2)$ and that of $K_{pq}$ is cyclic. From \cite[Proposition 6]{Be05},  we deduce that $q(K_{pq})=4$. Hence,  class number formula  (cf. \cite{wada}) implies that
 	$$h_2(K_{pq})=\frac{1}{2^5}q(K_{pq})h_2(p)h_2(q)h_2(pq)h_2(-2p)h_2(-2q)h_2(-2pq)h_2(-2)
 	=2h_2(-2q).$$
 	So the result for   this case.
 	
 	Suppose now that $p$ and $q$ satisfy conditions  $(\ref{cond 2})$. 	Using  \cite[Proposition 5]{Be05} and  \cite{connor88,  kaplan76, lemmermeyer1994kuroda},  we  similarly show that $h_2(K_{pq})=2\cdot h_2(-2q)$. Thus,  $h_2(K_{pq})$ is divisible by $8$. Thus as above its $2$-class group can not be of type $(2, 2)$,  which completes the proof.
 \end{proof}
 
 \begin{corollary} Keep  assumptions  of Theorem $\ref{thm 2-class group of the 3 triquad}$, then  the group $G_{k_{pq}}$  is neither  abelian  nor quaternion of order $8$.
 \end{corollary}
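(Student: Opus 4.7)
The plan is to derive the corollary as a direct bookkeeping consequence of Remark \ref{rmk 1 preliminaries} applied to Theorem \ref{thm 2-class group of the 3 triquad}. Since $\mathrm{Cl}_2(k_{pq})$ is of type $(2,2)$, the Galois group $G=G_{k_{pq}}$ satisfies $G/G'\simeq (2,2)$, so by the classification recalled at the beginning of \S\ref{section 0}, $G$ must be one of $A$, $Q_m$ ($m\geq 3$), $D_m$ ($m\geq 3$), or $S_m$ ($m\geq 4$). In particular, if $G$ were abelian then $G\simeq A$, and if $G$ were quaternion of order $8$ then $G\simeq Q_3$. Thus it suffices to rule out the two cases $G\simeq A$ and $G\simeq Q_3$.

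Next, I would recall, as was already used in the proof of Theorem \ref{thm 2-class group of the 3 triquad}, that $L_{pq}$, $F_{pq}$ and $K_{pq}$ are precisely the three unramified quadratic extensions of $k_{pq}$ (by the reference to \cite{Be05}). Theorem \ref{thm 2-class group of the 3 triquad} tells us that $\mathrm{Cl}_2(L_{pq})$ and $\mathrm{Cl}_2(F_{pq})$ are of type $(2,2)$, while $\mathrm{Cl}_2(K_{pq})$ is cyclic of order $2^{m+1}$ with $m\geq 2$. In particular, exactly one of the three $2$-class groups is cyclic and the other two are noncyclic.

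Finally I would close the argument by invoking Remark \ref{rmk 1 preliminaries}: the $2$-class groups of all three unramified quadratic extensions of $k_{pq}$ are simultaneously cyclic if and only if $G\simeq A$ or $G\simeq Q_3$. Since this condition fails in our setting, $G_{k_{pq}}$ is neither abelian nor quaternion of order $8$.

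There is essentially no technical obstacle here; the only delicate point is the identification of $L_{pq}$, $F_{pq}$, $K_{pq}$ as exactly the three unramified quadratic extensions of $k_{pq}$, but this has already been established in the course of proving Theorem \ref{thm 2-class group of the 3 triquad}, so the corollary is purely a packaging of the preceding results.
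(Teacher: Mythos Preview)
Your proposal is correct and follows exactly the same approach as the paper: both argue that $L_{pq}$, $F_{pq}$, $K_{pq}$ are the three unramified quadratic extensions of $k_{pq}$, then combine Theorem~\ref{thm 2-class group of the 3 triquad} with Remark~\ref{rmk 1 preliminaries} to rule out $A$ and $Q_3$. The paper's proof is simply a one-sentence pointer to these two results, whereas you have spelled out the logic explicitly.
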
	
 \begin{proof}
 	Since $L_{pq}$,   $F_{pq}$ and $K_{pq}$ are the three unramified quadratic extensions of $k_{pq}$.
 	So we have the  result by Theorem \ref{thm 2-class group of the 3 triquad} and Remark \ref{rmk 1 preliminaries}.
 \end{proof}
 
 \begin{corollary}
 	Keep  assumptions of Theorem $\ref{thm 2-class group of the 3 triquad}$,  then the group $G_{K_{pq}}$ is cyclic of order $2^{m+1}$.
 \end{corollary}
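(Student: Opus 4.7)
The plan is to combine item~3 of Theorem~\ref{thm 2-class group of the 3 triquad} with the general principle, recalled in the introduction, that a $2$-class field tower whose bottom group is cyclic terminates at the first step. Concretely, once we know that $\mathrm{Cl}_2(K_{pq})$ is cyclic of order $2^{m+1}$, the only remaining task is to argue that $G_{K_{pq}}$ coincides with this class group, and in particular has the same order and structure.

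I would first invoke Theorem~\ref{thm 2-class group of the 3 triquad}(3), giving $\mathrm{Cl}_2(K_{pq})\simeq \mathbb{Z}/2^{m+1}\mathbb{Z}$. By class field theory, the abelianization $G_{K_{pq}}/G_{K_{pq}}'\simeq \mathrm{Gal}(K_{pq}^{(1)}/K_{pq})\simeq \mathrm{Cl}_2(K_{pq})$ is then cyclic. Since $G_{K_{pq}}$ is a finite $2$-group, Burnside's basis theorem applies: a finite $2$-group whose abelianization is cyclic has a cyclic Frattini quotient of order at most $2$, and hence is itself cyclic. Consequently $G_{K_{pq}}'=1$, so $K_{pq}^{(2)}=K_{pq}^{(1)}$ and
\[
|G_{K_{pq}}|=[K_{pq}^{(1)}:K_{pq}]=h_2(K_{pq})=2^{m+1}.
\]
Combining cyclicity and order yields $G_{K_{pq}}\simeq \mathbb{Z}/2^{m+1}\mathbb{Z}$, which is exactly the claim.

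There is no genuine obstacle at this stage: all of the real work has already been carried out in Theorem~\ref{thm 2-class group of the 3 triquad}, where the cyclicity of $\mathrm{Cl}_2(K_{pq})$ and its exact order were extracted from the ambiguous class number formula, the unit index computation of \cite{Be05}, Kuroda's formula, and Wada's class number formula. The present corollary simply repackages that structural result in terms of the Galois group of the second Hilbert $2$-class field, using only the general group-theoretic fact about $2$-groups with cyclic abelianization.
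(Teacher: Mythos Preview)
Your argument is correct and matches the paper's intent. The paper states this corollary without proof, relying on the principle recalled in the introduction that a cyclic $2$-class group forces the tower to stop at the first step; your explicit use of Burnside's basis theorem just spells out why a $2$-group with cyclic abelianization must itself be cyclic, which is exactly the content of that principle.
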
	
 
 \begin{theorem}\label{class group of L*}Let $p$ and $q$ be two primes satisfying    conditions{\eqref{cond 1}} or $ {\eqref{cond 2}} $. Then
 	the $2$-class group of  $L_{pq}^*=\mathbb{Q}(\sqrt2,  \sqrt{q},  \sqrt{p},  i)$ is $\ZZ/2^m\ZZ$,  	where $h_2(-2q)=2^m$.
 \end{theorem}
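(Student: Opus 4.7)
The strategy is to compute $h_2(L_{pq}^*)$ via Wada's class number formula for the degree-$16$ multiquadratic field, and to establish cyclicity by a tower/group-theoretic argument. The key point throughout is the identification $L_{pq}^{*} = F(\sqrt 2)$, where $F = \mathbb{Q}(\sqrt p, \sqrt q, i)$ has odd class number by Lemma \ref{2}; equivalently $L_{pq}^* = K_{pq}(i)$ (since $K_{pq}\supset \mathbb{Q}(\sqrt{-2})$), so $L_{pq}^*$ is the imaginary CM extension of its maximal real subfield $L_{pq}^{*+} = \mathbb{Q}(\sqrt 2, \sqrt p, \sqrt q)$.

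First I would treat $L_{pq}^*$ as a totally imaginary multiquadratic field with $15$ quadratic subfields generated by products drawn from $\{2, p, q, -1\}$, reading off $h_2(k_i)$ from \cite{connor88,kaplan76} under conditions \eqref{cond 1} or \eqref{cond 2}. Most subfields contribute $1$, while the non-trivial factors (for instance $h_2(-2q) = 2^m$, $h_2(-2pq) = 4$, and $h_2(pq) = 2$ in case \eqref{cond 1}) combine into a specific power of $2$. Wada's formula then reduces the problem to computing the unit index $q(L_{pq}^*)$. This is the main technical step: starting from the FSU of $\mathbb{Q}(\sqrt p, \sqrt q)$ (or $\mathbb{Q}(\sqrt p, \sqrt{2q})$) in Lemma \ref{lemma units of biquadratic fields} together with Remark \ref{rmk1 on units}, I would build an FSU of $L_{pq}^{*+}$ by two successive Kuroda-style unit arguments, then apply Lemma \ref{Lemme azizi} to pass to the CM extension $L_{pq}^* = L_{pq}^{*+}(i)$. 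At each stage one must decide whether elements of the form $\sqrt{\xi\,\varepsilon}$ lie in the field, which accounts for the case split between \eqref{cond 1} and \eqref{cond 2}. Plugging the resulting $q(L_{pq}^*)$ into Wada's formula should give $h_2(L_{pq}^*) = 2^m$.

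To conclude cyclicity, I would exploit the tower structure of §\ref{section 0}. By Theorem \ref{thm 2-class group of the 3 triquad} and the preceding corollaries, $G_{k_{pq}}$ has $G/G'\simeq (2,2)$, is non-abelian and not $Q_3$, so it lies in $\{D_M, Q_M, S_M\}$; Proposition \ref{prop the order of G_k} applied to $k_{pq}\subset K_{pq}$ (where $K_{pq}$ has cyclic $2$-class group of order $2^{m+1}$) pins down $M = m+2$. Since $L_{pq}^*$ sits as an unramified abelian extension between $L_{pq}$ and $k_{pq}^{(2)}$, the group $\mathrm{Cl}_2(L_{pq}^*)$ is identified with a subquotient of $G_{k_{pq}}$ involving the cyclic commutator subgroup $H_2'$; matching this with the table of Theorem \ref{1} forces $\mathrm{Cl}_2(L_{pq}^*)$ to be cyclic of the computed order $2^m$.

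The chief obstacle is the explicit unit-index computation: a degree-$16$ multiquadratic field has a unit lattice of rank $15$, and tracking which square roots of products of fundamental units are again units requires a careful multi-step induction, with the answer pivoting sharply on the arithmetic conditions on $p$ and $q$. A delicate accompanying point will be verifying that the bookkeeping for which $\varepsilon$ makes $(2+\mu_{n_0})\varepsilon$ a square in Lemma \ref{Lemme azizi} produces exactly the right index to yield $2^m$ and not a larger power of $2$.
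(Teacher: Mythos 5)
Your headline strategy --- Wada's formula for the degree-$16$ field together with a direct determination of the unit index $q(L_{pq}^*)$ --- has a genuine gap exactly at the step you yourself flag as ``delicate''. The norm-map/Kuroda-style analysis you propose is precisely what is carried out in \S~\ref{section 1} (Lemmas \ref{29} and \ref{thm units case T}), and it does \emph{not} decide the index: it leaves two candidate unit groups for $L_{pq}^{*+}$ and again two for $L_{pq}^*$, i.e.\ an unresolved factor of $2$ in $q(L_{pq}^*)$, which is exactly the difference between $h_2(L_{pq}^*)=2^m$ and $2^{m-1}$. In the paper the logical arrow points the other way: Theorem \ref{thm main theorem on units} \emph{uses} Theorem \ref{class group of L*} to force $q(\mathbb{K})=2^8$ and thereby select the correct unit group. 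So unless you supply an independent argument settling that last square-root question, the Wada route stalls; moreover, the unit lemmas are set up only under \eqref{cond 1}, so condition \eqref{cond 2} would require a separate unit analysis that nobody has done.

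Your fallback tower argument, once tightened, does give a complete proof, and it is a legitimate variant of the paper's. The precise version is: $L_{pq}^*$ is the compositum of the three unramified quadratic extensions $L_{pq}$, $F_{pq}$, $K_{pq}$ of $k_{pq}$, hence $L_{pq}^*=k_{pq}^{(1)}$ because $\mathrm{Cl}_2(k_{pq})\simeq(2,2)$; class field theory then identifies $\mathrm{Cl}_2(L_{pq}^*)\simeq\mathrm{Gal}(k_{pq}^{(2)}/k_{pq}^{(1)})=G_{k_{pq}}'$, which is cyclic by the general structure of $2$-groups with abelianization $(2,2)$ recalled in \S~\ref{section 0} --- not via the capitulation table of Theorem \ref{1}, and not via $H_2'$ (the quotients $H_i/H_i'$ describe the quadratic extensions $F_i$, not the genus field). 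Proposition \ref{prop the order of G_k} applied to $k_{pq}\subset K_{pq}$ gives $|G_{k_{pq}}|=2\,h_2(K_{pq})=2^{m+2}$, hence $|G_{k_{pq}}'|=2^m$, and both \eqref{cond 1} and \eqref{cond 2} are covered uniformly, with no class number formula for the degree-$16$ field at all. The paper proceeds differently but in the same spirit for the order: it proves cyclicity by showing $F=\mathbb{Q}(\sqrt p,\sqrt q,i)$ has odd class number (Lemma \ref{2}) and applying the ambiguous class number formula to $L_{pq}^*/F$, where exactly two primes ramify, and it then gets $h_2(L_{pq}^*)=2^m$ by applying Proposition \ref{prop the order of G_k} to the pair $K_{pq}\subset L_{pq}^*$ together with $h_2(K_{pq})=2^{m+1}$. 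In short: drop the Wada/unit-index thread, make the group-theoretic identification precise, and your argument stands; as written, the main route is blocked and the auxiliary route is stated too loosely to carry the proof on its own.
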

 \begin{proof}Let $F=\mathbb{Q}(\sqrt{q},  \sqrt{p},  i)$, so we know by  Lemma \ref{2} that the class number of $F$ is odd.
 	If  $p$ and $q$ verify   conditions {\eqref{cond 1}} (resp. $ {\eqref{cond 2}} $) we have $2$ is unramified in $k'=\QQ(\sqrt{p},   \sqrt{-q})$
 	(resp. $k''=\QQ(\sqrt{-p},   \sqrt{-q}))$,   then the decomposition group of $2$ is a non trivial cyclic subgroup of $Gal(k'/\QQ)$ (resp. $Gal(k''/\QQ)$) (in fact $2$ is inert in $\QQ(\sqrt{-pq} )$ (resp. $\QQ(\sqrt{-p})$ )). Since    a non trivial cyclic subgroup of $Gal(k'/\QQ)$ (resp. $Gal(k''/\QQ)$) has two elements then there are exactly $2$ primes of  $k'$ (resp. $k''$)  above $2$. As $2$ ramify in  $\QQ(\sqrt{-1})$,   it follows that there are exactly $2$ primes of $F$ above $2$.
 	Therefore,
 	there are exactly two primes that ramify in $L_{pq}^*/F$.
 	By  ambiguous class number formula (cf. \cite{Gr}) $rank (\mathrm{Cl}_2(L_{pq}^*))=2-1-e=1-e$, where 
 	$e$ is defined by $[E_{F}: E_{F}\cap N_{L_{pq}^*/F}(L_{pq}^*)]=2^e$.
 	Since $L_{pq}^*$ is the genus field of $L_{pq}=\mathbb{Q}(\sqrt2,  \sqrt{qp},  i)$,  so $[L_{pq}^*:L_{pq}]=2$; moreover  $\mathrm{Cl}_2(L_{pq})\simeq(2,  2)$,   then  the $2$-class group   $\mathrm{Cl}_2(L_{pq}^*)$ is cyclic or  of type $(2,  2)$ (cf. Remark \ref{rmk 1 preliminaries}). It follows  by the previous equality that
 	$\mathrm{Cl}_2(L_{pq}^*)$ is cyclic. Note that  $L_{pq}^*$ is an unramified quadratic extension of $K_{pq}$. So by Proposition \ref{prop the order of G_k} we have
 	${L_{pq}^*}^{(1)}=K_{pq}^{(2)}$ and $G_{K_{pq}}=2\cdot h_2(L_{pq}^{*})$.
 	Since by Theorem \ref{thm 2-class group of the 3 triquad} the $2$-class group of $K_{pq}$ is cyclic of order $2^{m+1}$,   it follows that ${L_{pq}^*}^{(1)}=K_{pq}^{(2)}=K_{pq}^{(1)}$ and $2\cdot h_2(L_{pq}^{*})=h_2(K_{pq})=2^{m+1}$. Hence we have the theorem.	
 \end{proof}
 
 \begin{corollary}
 	The group $G_{L_{pq}^*}$ is abelian.
 \end{corollary}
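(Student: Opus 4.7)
My plan is to read the statement off directly from the chain of identifications already established inside the proof of Theorem~\ref{class group of L*}. There it was shown that ${L_{pq}^*}^{(1)} = K_{pq}^{(2)}$, and since $\mathrm{Cl}_2(K_{pq})$ is cyclic by Theorem~\ref{thm 2-class group of the 3 triquad}, the 2-class field tower of $K_{pq}$ collapses at the first step, so $K_{pq}^{(2)} = K_{pq}^{(1)}$. Splicing these equalities together gives ${L_{pq}^*}^{(1)} = K_{pq}^{(1)}$.

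First I would then observe that the latter field sits at the top of the (already terminated) 2-class field tower of $K_{pq}$, so $h_2\bigl({L_{pq}^*}^{(1)}\bigr) = h_2\bigl(K_{pq}^{(1)}\bigr) = 1$. This forces ${L_{pq}^*}^{(2)} = {L_{pq}^*}^{(1)}$, i.e. the 2-class field tower of $L_{pq}^*$ also terminates at the first step. Consequently
$$G_{L_{pq}^*} \;=\; \mathrm{Gal}({L_{pq}^*}^{(2)}/L_{pq}^*) \;=\; \mathrm{Gal}({L_{pq}^*}^{(1)}/L_{pq}^*) \;\simeq\; \mathrm{Cl}_2(L_{pq}^*),$$
and by Theorem~\ref{class group of L*} this last group is cyclic of order $2^m$, in particular abelian.

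There is no substantive obstacle: the corollary is essentially a bookkeeping observation on top of the identifications already distilled in the proof of Theorem~\ref{class group of L*}. The only point one should be careful about is invoking the right direction of the ``cyclic 2-class group implies terminating tower'' principle (recalled in the introduction), which is used to pass from cyclicity of $\mathrm{Cl}_2(K_{pq})$ to $K_{pq}^{(1)} = K_{pq}^{(2)}$, and which is what makes the equality ${L_{pq}^*}^{(1)} = K_{pq}^{(1)}$ informative rather than circular.
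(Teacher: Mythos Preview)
Your argument is correct, and since the paper states the corollary without proof, your reasoning is in the intended spirit: it is a bookkeeping consequence of Theorem~\ref{class group of L*}.

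That said, your route through $K_{pq}$ is slightly more circuitous than necessary. Theorem~\ref{class group of L*} already shows that $\mathrm{Cl}_2(L_{pq}^*)$ is cyclic. The principle recalled in the introduction (``for $\mathrm{Cl}_2(k)$ being cyclic, the Hilbert $2$-class field tower of $k$ terminates at the first step $k^{(1)}$'') applies directly to $k = L_{pq}^*$, giving ${L_{pq}^*}^{(2)} = {L_{pq}^*}^{(1)}$ and hence $G_{L_{pq}^*} \simeq \mathrm{Cl}_2(L_{pq}^*)$ immediately, without needing the detour through the equalities ${L_{pq}^*}^{(1)} = K_{pq}^{(2)} = K_{pq}^{(1)}$. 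Your version is not wrong, just redundant: you invoke the cyclic-tower-collapse principle for $K_{pq}$ and then transfer the conclusion to $L_{pq}^*$, when you could have invoked it for $L_{pq}^*$ in the first place.
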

 
 \section{\textbf{Units of some multiquadratic number fields}}\label{section 1}	
 Let  $p$ and $q$ be   two prime integers satisfying   conditions ${\eqref{cond 1}}$, namely 
 $$p\equiv -q\equiv 1\pmod 4, 	\left(\dfrac{2}{p}\right)=-1,  \left(\dfrac{2}{q}\right)=1 \text{ and }  \left(\dfrac{p}{q}\right)=-1.$$
 Consider the field 	$\KK=\QQ(\sqrt 2,  \sqrt{p},  \sqrt{q},  \sqrt{-1})$,  and let $\KK^+=\QQ(\sqrt 2,  \sqrt{p},  \sqrt{q})$ be its maximal real subfield.
 The main task of this  section is to determine fundamental system of units of $\KK^+$ and $\KK$,  that will be used   to prove   Theorems  \ref{thm G_L quaternion} and \ref{th 2 on G_F}. 	
 To prove this result,  we have to  do some preparations. In the same manner as in the proof of Lemma \ref{lemma units of biquadratic fields}, one shows the following lemma.
 \begin{lemma}\label{lm expressions of units}
 	Let $p$ and $q$ be two primes satisfying    conditions $(\ref{cond 1})$.
 	\begin{enumerate}[\rm 1.] 		
 		\item  Let $\varepsilon_{pq}=a+b\sqrt{pq}$,    $a,\, b\in\ZZ$, then  $ p(a-1)$ is a square in $\NN$,  and   $\sqrt{2 \varepsilon_{pq}}=b_1\sqrt{p}+b_2\sqrt{q}$ and 	$2=-pb_1^2+qb_2^2$,  for some integers $b_1$ and $b_2$.		
 		\item Let $\varepsilon_{2pq}=x+y\sqrt{2pq}$, $x,\, y\in\ZZ$, then  $2p(x-1)$ is a square in $\NN$, and $\sqrt{2\varepsilon_{2pq}}=y_1\sqrt{2p}+y_2\sqrt{q}$ and 	$2=-2py_1^2+qy_2^2$,  for some integers $y_1$ and $y_2$.
 		\item  Let $ \varepsilon_{2q}=c +d\sqrt{2q}$,   $c,\, d\in\ZZ$, then     $c+1$ is a square in $\NN$, and 
 		$\sqrt{ 2\varepsilon_{  2q}}=d_1 +d_2\sqrt{2q}$ and 	$2=d_1^2-2qd_2^2$,  for some integers $d_1$ and $d_2$.
 		\item  Let  $ \varepsilon_{q}=c' +d'\sqrt{q}$,  $c', \, d'\in\ZZ$, then    $c'+1$ is a square in $\NN$, and
 		$\sqrt{ 2\varepsilon_{  q}}=d_1' +d_2'\sqrt{q}$ and 	$2=d_1'^2-qd_2'^2$,  for some integers $d_1'$ and $ d_2'$.		
 	\end{enumerate}
 \end{lemma}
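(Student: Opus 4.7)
My plan is to prove the four items by the same Wada-style factorization method as in the proof of Lemma \ref{lemma units of biquadratic fields}. The hypotheses \eqref{cond 1} together with the supplementary laws force $p\equiv 5\pmod 8$ and $q\equiv 7\pmod 8$. For each $D\in\{pq,\,2pq,\,2q,\,q\}$, write $\varepsilon_D=X+Y\sqrt D$; a norm of $-1$ would give $X^2\equiv -1\pmod q$, which is impossible since $q\equiv 3\pmod 4$. Thus $N(\varepsilon_D)=+1$ and $(X-1)(X+1)=DY^2$, and I enumerate the coprime systems $\{X-1,\,X+1\}=\{A\alpha^2,\,B\beta^2\}$ where $AB$ is $D$ up to the appropriate power of $2$.

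Each candidate system is tested against the Legendre-symbol hypotheses $(2/p)=-1$, $(2/q)=1$, $(p/q)=(q/p)=-1$ together with their consequences $(-1/q)=(-2/q)=-1$. In items $1$, $3$, $4$ these mod-$p$ and mod-$q$ tests eliminate every candidate but the one appearing in the statement, exactly as in the proof of Lemma \ref{lemma units of biquadratic fields}. For instance, in item $3$, using Remark \ref{rmk1 on units} I write $\sqrt{2\varepsilon_{2q}}=d_1+d_2\sqrt{2q}$ with $d_1,d_2\in\ZZ$; squaring and comparing with $2\varepsilon_{2q}=2c+2d\sqrt{2q}$ yields $d_1^2-2qd_2^2=\pm 2$. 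The sign $-2$ is excluded because $d_1^2\equiv -2\pmod q$ would contradict $(-2/q)=-1$. Hence $d_1^2-2qd_2^2=2$ and therefore $c+1=d_1^2$ is a square in $\NN$. Item $4$ is identical with $q$ in place of $2q$, and item $1$ is essentially already contained in Lemma \ref{lemma units of biquadratic fields}.

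For item $2$, the three prime factors of $2pq$ produce one additional system, $X-1=pq\alpha^2$ and $X+1=2\beta^2$, which passes both the mod-$p$ and mod-$q$ tests. I eliminate this system by invoking the fundamentality of $\varepsilon_{2pq}$: reduction mod $4$ forces $\alpha$ to be even, and writing $\alpha=2\alpha'$ the system becomes $\beta^2-2pq\alpha'^2=1$, so $\eta:=\beta+\alpha'\sqrt{2pq}$ is a unit of $\QQ(\sqrt{2pq})$ satisfying $\eta^2=\varepsilon_{2pq}$, contradicting the fundamentality of $\varepsilon_{2pq}$. The unique surviving system is therefore $X-1=2py_1^2$ and $X+1=qy_2^2$; then $2p(X-1)=(2py_1)^2$ is a square, subtracting gives $qy_2^2-2py_1^2=2$, and adding shows $2\varepsilon_{2pq}=(y_1\sqrt{2p}+y_2\sqrt q)^2$. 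The same summation step delivers the expressions for $\sqrt{2\varepsilon_D}$ in the other items. The main obstacle is precisely this extra system in item $2$, where the Legendre-symbol tests alone are inconclusive and one must appeal to the fundamentality of $\varepsilon_{2pq}$ (equivalently, a $2$-adic obstruction using $p\equiv 5,\,q\equiv 7\pmod 8$) to rule it out.
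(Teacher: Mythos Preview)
Your proposal is correct and follows exactly the approach the paper indicates (``in the same manner as in the proof of Lemma~\ref{lemma units of biquadratic fields}''): Wada-style factorization of $(X-1)(X+1)=DY^2$, elimination of systems via the Legendre-symbol hypotheses, and the fundamentality of $\varepsilon_D$ to kill the one surviving spurious system. One small remark on phrasing: your sentence ``In items $1$, $3$, $4$ these mod-$p$ and mod-$q$ tests eliminate every candidate but the one'' is a slight overstatement---if one carries out the full enumeration for $D=q$ or $D=2q$ there is again a system (e.g.\ $c'+1=2\alpha^2$, $c'-1=2q\beta^2$) that passes the Legendre tests and must be killed by fundamentality, just as system~$(4)$ is in the paper's proof of Lemma~\ref{lemma units of biquadratic fields}; but since you handle items $3$, $4$ via Remark~\ref{rmk1 on units} and item $1$ via Lemma~\ref{lemma units of biquadratic fields} directly, your argument is complete as written.
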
	
 
 \begin{lemma}\label{29}
 	Let $p$ and $q$ be two primes satisfying  conditions   ${\eqref{cond 1}}$.
 	Let $\KK^+=\QQ(\sqrt 2,  \sqrt{p},  \sqrt{q})$, so the unit group of $\KK^+$ is one of the following:
 	\begin{enumerate}[\rm 1.]
 		\item $E_{\mathbb{K}^+}=\langle-1,  \varepsilon_{2},  \varepsilon_{p},  \sqrt{\varepsilon_{q}},  \sqrt{\varepsilon_{2q}},  \sqrt{\varepsilon_{pq}},  \sqrt{\varepsilon_{2pq}},  \sqrt{\varepsilon_{2}\varepsilon_{p}\varepsilon_{2p}}\rangle, $
 		\item $E_{\mathbb{K}^+}=\langle-1,  \varepsilon_{2},  \varepsilon_{p},  \sqrt{\varepsilon_{q}},  \sqrt{\varepsilon_{2q}},  \sqrt{\varepsilon_{pq}},   \sqrt{\varepsilon_{2}\varepsilon_{p}\varepsilon_{2p}},
 		\sqrt[4]{\varepsilon_{2}^2\varepsilon_{q}\varepsilon_{pq}\varepsilon_{2pq}}\rangle.$
 	\end{enumerate}
 \end{lemma}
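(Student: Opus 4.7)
The unit rank of the real triquadratic field $\KK^+$ is $7$, so any FSU consists of seven units together with the torsion $\{\pm 1\}$. The plan is to run Wada's algorithm from \cite{wada}, as is done systematically throughout the paper: start with the natural generators $\varepsilon_2,\varepsilon_p,\varepsilon_q,\varepsilon_{2p},\varepsilon_{2q},\varepsilon_{pq},\varepsilon_{2pq}$ coming from the seven quadratic subfields, and iteratively replace generators by their square roots whenever products turn out to be squares in $\KK^+$.

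For the first round, the four identities provided by Lemma \ref{lm expressions of units} show that each of $2\varepsilon_q,\,2\varepsilon_{2q},\,2\varepsilon_{pq},\,2\varepsilon_{2pq}$ is the square of an element lying in a biquadratic subfield of $\KK^+$. Since $\sqrt 2\in\KK^+$, dividing by $(\sqrt 2)^2$ shows $\sqrt{\varepsilon_q},\,\sqrt{\varepsilon_{2q}},\,\sqrt{\varepsilon_{pq}},\,\sqrt{\varepsilon_{2pq}}\in\KK^+$. Moreover, the condition $p\equiv 5\pmod 8$ (forced by $p\equiv 1\pmod 4$ and $\left(\tfrac 2p\right)=-1$) together with $N(\varepsilon_2)=N(\varepsilon_p)=N(\varepsilon_{2p})=-1$ forces, by the classical Kuroda-style computation for $\QQ(\sqrt 2,\sqrt p)$, that $\varepsilon_2\varepsilon_p\varepsilon_{2p}$ is a square in $\QQ(\sqrt 2,\sqrt p)\subset\KK^+$. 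The first round therefore produces the candidate system
\[S=\{\varepsilon_2,\ \varepsilon_p,\ \sqrt{\varepsilon_q},\ \sqrt{\varepsilon_{2q}},\ \sqrt{\varepsilon_{pq}},\ \sqrt{\varepsilon_{2pq}},\ \sqrt{\varepsilon_2\varepsilon_p\varepsilon_{2p}}\}\]
of seven independent units, which, together with $\langle -1\rangle$, is exactly the list in case (1).

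For the second round, one must check whether any nontrivial product $\prod_{u\in S}u^{e_u}$ with $e_u\in\{0,1\}$ is itself a square in $\KK^+$, and adjoin its square root if so. Galois equivariance under $\mathrm{Gal}(\KK^+/\QQ)\simeq(\ZZ/2\ZZ)^3$, signature constraints at the eight real places, and descent via the norm maps to the biquadratic subfields eliminate nearly all of the $2^7-1$ possible products. The expected outcome is that the unique product which may survive as a square is $\varepsilon_2^2\,\varepsilon_q\,\varepsilon_{pq}\,\varepsilon_{2pq}$; its square root $\varepsilon_2\sqrt{\varepsilon_q\varepsilon_{pq}\varepsilon_{2pq}}$ is itself a square in $\KK^+$ precisely when $\sqrt[4]{\varepsilon_2^2\varepsilon_q\varepsilon_{pq}\varepsilon_{2pq}}\in\KK^+$, and this is exactly the dichotomy between cases (1) and (2). (Note that the fourth root expressed as $\varepsilon_2\sqrt{\varepsilon_q}\sqrt{\varepsilon_{pq}}\sqrt{\varepsilon_{2pq}}$ shows how $\sqrt{\varepsilon_{2pq}}$ is absorbed into the new generator in case (2).)

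The main obstacle is precisely the second-round enumeration: ruling out every other combination as a square. The strategy is to treat the candidates by descent, grouping them according to which biquadratic or triquadratic subfield of $\KK^+$ contains them. For each such $F$, the norm $N_{\KK^+/F}$ pushes a putative square in $\KK^+$ to a square in $F$, and the explicit expressions of Lemma \ref{lm expressions of units} (supplemented by the analogous expressions for $\varepsilon_p$ and $\varepsilon_{2p}$ coming from the Kuroda analysis of $\QQ(\sqrt 2,\sqrt p)$) constrain the admissible $2$-adic factorizations enough to rule out all other possibilities. Once this bookkeeping is done, the unit index $[E_{\KK^+}:\langle -1,S\rangle]$ is either $1$ (case 1) or $2$ (case 2), which completes the classification.
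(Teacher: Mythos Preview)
Your approach is essentially the paper's: run Wada's algorithm, establish the candidate system $S$ from the square roots coming from Lemma~\ref{lm expressions of units} and from $\varepsilon_2\varepsilon_p\varepsilon_{2p}$, then use norm maps to biquadratic subfields to sieve out which products of elements of $S$ are squares in $\KK^+$. The endpoint you name (the single survivor being $\varepsilon_2\sqrt{\varepsilon_q}\sqrt{\varepsilon_{pq}}\sqrt{\varepsilon_{2pq}}$, equivalently the fourth root of $\varepsilon_2^2\varepsilon_q\varepsilon_{pq}\varepsilon_{2pq}$) is exactly what the paper obtains.

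The gap is that you stop at the strategy. Sentences such as ``the expected outcome is that the unique product which may survive\ldots'' and ``once this bookkeeping is done'' defer precisely the part that carries all the content. In the paper's proof this is not bookkeeping left to the reader: the authors compute a table of the actions $\varepsilon^{\sigma_i}$ and $\varepsilon^{1+\sigma_i}$, $\varepsilon^{1+\sigma_i\sigma_j}$ for all relevant units (using the explicit expressions $\sqrt{2\varepsilon_{\bullet}}=\alpha\sqrt{\ast}+\beta\sqrt{\ast}$ from Lemma~\ref{lm expressions of units}), and then apply, in order, $N_{\KK^+/L_2}$ with $L_2=\QQ(\sqrt2,\sqrt q)$ to force $b=g=0$, $N_{\KK^+/L_4}$ with $L_4=\QQ(\sqrt p,\sqrt q)$ to force $d=f$, and $N_{\KK^+/L_5}$ with $L_5=\QQ(\sqrt p,\sqrt{2q})$ to force $e=0$ and $a=d$. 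Each of these steps uses the known FSU of the target biquadratic field to recognize when the norm is or is not a square there. Your proposal names the mechanism but does not execute any of these eliminations; as written it would not let a reader verify that, say, $\sqrt{\varepsilon_{2q}}$ really drops out (the $e=0$ conclusion), which is where the dichotomy between the two cases actually crystallizes. To complete the argument you need to carry out these norm computations explicitly.
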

 \begin{proof}
 	To prove this lemma,  we use the algorithm described by \cite{wada}. Consider the  following   diagram (see Figure \ref{fig:1}):
 	\begin{figure}[H]
 		$$\xymatrix@R=0.8cm@C=0.3cm{
 			&\KK^+=\QQ( \sqrt 2,  \sqrt{p},  \sqrt{q})\ar@{<-}[d] \ar@{<-}[dr] \ar@{<-}[ld] \\
 			L_1=\QQ(\sqrt 2, \sqrt{p})\ar@{<-}[dr]& L_2=\QQ(\sqrt 2,  \sqrt{q}) \ar@{<-}[d]& L_3=\QQ(\sqrt 2,  \sqrt{pq})\ar@{<-}[ld]\\
 			&\QQ(\sqrt 2)}$$
 		\caption{Subfields of $\KK^+/\QQ(\sqrt 2)$}\label{fig:1}
 	\end{figure}
 	
 	\noindent By \cite{azizitalbi},  Lemma \ref{lemma units of biquadratic fields} and \cite{chemsZkhnin1} we have
 	a FSU of $L_1$ is given by $\{\varepsilon_{2},  \varepsilon_{p},
 	\sqrt{\varepsilon_{2}\varepsilon_{p}\varepsilon_{2p}}\}$,  a FSU of $L_2$ is given by $\{\varepsilon_{2},  \sqrt{\varepsilon_{q}},  \sqrt{\varepsilon_{2q}}\}$ and   a FSU of $L_3$ is given by $\{ \varepsilon_{2},  \varepsilon_{pq},
 	\sqrt{\varepsilon_{pq}\varepsilon_{2pq}}\}$. 	
 	It follows that,   	$$E_{L_1}E_{L_2}E_{L_3}=\langle-1,   \varepsilon_{2},  \varepsilon_{p},  \varepsilon_{pq},  \sqrt{\varepsilon_{q}},  \sqrt{\varepsilon_{2q}},   \sqrt{\varepsilon_{pq}\varepsilon_{2pq}},  \sqrt{\varepsilon_{2}\varepsilon_{p}\varepsilon_{2p}}\rangle.$$	
 	Note that a FSU of $\KK$ consists  of seven  units chosen from those of $L_1$,  $L_2$ and $L_3$,  and  from the square  roots of the units of $E_{L_1}E_{L_2}E_{L_3}$ which are squares in $\KK$ (cf. \cite{wada}).
 	Thus we shall determine elements of $E_{L_1}E_{L_2}E_{L_3}$ which are squares in $\KK^+$. Suppose that  $X$ is an element of $\KK^+$ which is a square of an element of $E_{L_1}E_{L_2}E_{L_3}$, so
 	$$X^2=\varepsilon_{2}^a\varepsilon_{p}^b \varepsilon_{pq}^c\sqrt{\varepsilon_{q}}^d\sqrt{\varepsilon_{2q}}^e\sqrt{\varepsilon_{pq}\varepsilon_{2pq}}^f
 	\sqrt{\varepsilon_{2}\varepsilon_{p}\varepsilon_{2p}}^g, $$
 	where $a,  b,  c,  d,  e,  f$ and $g$ are in $\{0,  1\}$.
 	
 	We shall use norm maps from $\KK^+$ to its  sub-extensions  to eliminate  the cases of $X^2$ which do not occur. Set
 	$G=\mathrm{Gal}(\KK^+/\QQ)=\langle \sigma_1,  \sigma_2,  \sigma_3\rangle$,
 	where\begin{center} $\sigma_1(\sqrt{2})=-\sqrt{2}$,  $\sigma_2(\sqrt{p})=-\sqrt{p}$ and $\sigma_3(\sqrt{q})=-\sqrt{q}$, \\ and
 		$\sigma_i(\sqrt{2})=\sqrt{2}$ for $i\in\{2,  3\}$, \\
 		$\sigma_i(\sqrt{p})=\sqrt{p}$ for $i\in\{1,  3\}$ and \\
 		$\sigma_i(\sqrt{q})=\sqrt{q}$ for $i\in\{1,  2\}$.\end{center} Hence $L_1$,  $L_2$ and $L_3$ are
 	the fixed fields of the subgroups of $G$ generated respectively by $\sigma_3$,  $\sigma_2$ and $\sigma_2\sigma_3$.

 	Let us firstly do some computations that will help in the computations of these norm maps which we shall use now and for the proof of the next lemmas as well.
 	Let $L_4=\QQ(\sqrt p,  \sqrt q)$ and $L_5=\QQ(\sqrt p,  \sqrt{2q})$. By Lemma \ref{lemma units of biquadratic fields}, 
 	a FSU of $L_4$ is  $\{\varepsilon_{p},  \varepsilon_{q},  \sqrt{\varepsilon_{q}\varepsilon_{pq}}\}$,
 	and  a FSU of $L_5$ is  $\{\varepsilon_{p},  \varepsilon_{2q},  \sqrt{\varepsilon_{2pq}}\}$.

 	Let $u,  v,  t,  k$ and $ r\in \{0,  1\}$. Table  \ref{table1} (page \pageref{table1}) will be used to compute norm maps. Its  $8$-th line,  for example,   is constructed as follows (the other lines are constructed in the same manner).  By Lemma \ref{lm expressions of units},    there are two  integers $y_1$ and $y_2$ such that
 	$\sqrt{\varepsilon_{2pq}}=\frac{\sqrt{2}}{2}(y_1\sqrt{2p}+y_2\sqrt{q})$. Which implies that
 	$$\begin{array}{ll}
 	\sqrt{\varepsilon_{2pq}}^{\sigma_1}&=\frac{-\sqrt{2}}{2}(-y_1\sqrt{2p}+y_2\sqrt{q})\\
 	&=\frac{\frac{-\sqrt{2}}{2}(-y_1\sqrt{2p}+y_2\sqrt{q})(y_1\sqrt{2p}+y_2\sqrt{q})}{(y_1\sqrt{2p}+y_2\sqrt{q})}\\
 	&=\frac{\frac{-\sqrt{2}}{2}(-y_1^22p+y_2^2q)}{\sqrt{2}\sqrt{\varepsilon_{2pq}}}\\
 	&=\frac{\frac{-\sqrt{2}}{2}\cdot2}{\sqrt{2}\sqrt{\varepsilon_{2pq}}}
 	=\frac{-1}{\sqrt{\varepsilon_{2pq}}}\cdot
 	\end{array}$$
 	\begin{center}$\begin{array}{ll}
 		\sqrt{\varepsilon_{2pq}}^{\sigma_2}&=\frac{\sqrt{2}}{2}(-y_1\sqrt{2p}+y_2\sqrt{q})\\
 		&=-\frac{-\sqrt{2}}{2}(-y_1\sqrt{2p}+y_2\sqrt{q})\\
 		&=-\frac{-1}{\sqrt{\varepsilon_{2pq}}}
 		=\frac{1}{\sqrt{\varepsilon_{2pq}}}\cdot
 		\end{array}$
 		$\begin{array}{ll}
 		\sqrt{\varepsilon_{2pq}}^{\sigma_3}&=\frac{\sqrt{2}}{2}(y_1\sqrt{2p}-y_2\sqrt{q})\\
 		&=\frac{-\sqrt{2}}{2}(-y_1\sqrt{2p}+y_2\sqrt{q})\\
 		&=\frac{-1}{\sqrt{\varepsilon_{2pq}}}\cdot \end{array}$\end{center}
 	
 	$$\begin{array}{ll}
 	\sqrt{\varepsilon_{2pq}}^{1+\sigma_1}&=\sqrt{\varepsilon_{2pq}}\cdot \frac{-1}{\sqrt{\varepsilon_{2pq}}}=-1.\\ \sqrt{\varepsilon_{2pq}}^{1+\sigma_2}&=\sqrt{\varepsilon_{2pq}}\cdot \frac{1}{\sqrt{\varepsilon_{2pq}}}=1.\\ \sqrt{\varepsilon_{2pq}}^{1+\sigma_3}&=\sqrt{\varepsilon_{2pq}}\cdot \frac{-1}{\sqrt{\varepsilon_{2pq}}}=-1.\\
 	\sqrt{\varepsilon_{2pq}}^{1+\sigma_1\sigma_2}&=\sqrt{\varepsilon_{2pq}}\cdot (\frac{1}{\sqrt{\varepsilon_{2pq}}})^{\sigma_{1}}=-\varepsilon_{2pq}.\\
 	\sqrt{\varepsilon_{2pq}}^{1+\sigma_1\sigma_3}&=\sqrt{\varepsilon_{2pq}}\cdot (\frac{-1}{\sqrt{\varepsilon_{2pq}}})^{\sigma_{1}}=\varepsilon_{2pq}.\\
 	\sqrt{\varepsilon_{2pq}}^{1+\sigma_2\sigma_3}&=\sqrt{\varepsilon_{2pq}}\cdot (\frac{-1}{\sqrt{\varepsilon_{2pq}}})^{\sigma_{2}}=-\varepsilon_{2pq}.
 	\end{array}$$
 	
 	Now we return back to our square
 	$X^2=\varepsilon_{2}^a\varepsilon_{p}^b \varepsilon_{pq}^c\sqrt{\varepsilon_{q}}^d\sqrt{\varepsilon_{2q}}^e\sqrt{\varepsilon_{pq}\varepsilon_{2pq}}^f
 	\sqrt{\varepsilon_{2}\varepsilon_{p}\varepsilon_{2p}}^g$, 
 	by applying   the norm $N_{\KK^+/L_2}=1+\sigma_2$ (see Table  \ref{table1} page \pageref{table1}) we get:
 	\begin{eqnarray*}
 		N_{\KK^+/L_2}(X^2)&=&\varepsilon_{2}^{2a}\cdot (-1)^{b}\cdot 1\cdot \varepsilon_q^d \cdot \varepsilon_{2q}^e \cdot 1 \cdot (-1)^{ vg} \cdot \varepsilon_{2}^g \\
 		&=&\varepsilon_{2}^{2a}\varepsilon_{q}^{d}\varepsilon_{2q}^{e}(-1)^{b+vg}\varepsilon_{2}^{g}.
 	\end{eqnarray*}
 	As $\varepsilon_{q}$,   $\varepsilon_{2q}$ are squares in $L_2$ and $N_{\KK^+/L_2}(X^2)>0$,   so  $b+vg\equiv0\pmod2$ and $\varepsilon_{2}^g$ is a square in $L_2$. But $\varepsilon_{2}$ is not  a square in $L_2$,   then $g=0$ and thus $b=0$. Therefore, $X^2$ become 
 	$$X^2=\varepsilon_{2}^a \varepsilon_{pq}^c\sqrt{\varepsilon_{q}}^d\sqrt{\varepsilon_{2q}}^e\sqrt{\varepsilon_{pq}\varepsilon_{2pq}}^f.$$
 	Similarly,   by  applying   $N_{\KK^+/L_3}=1+\sigma_2\sigma_3$ (see Table  \ref{table1} page \pageref{table1}) one gets:
 	$$N_{\KK^+/L_3}(X^2)=\varepsilon_{2}^{2a}\cdot \varepsilon_{pq}^{2c} \cdot (\varepsilon_{pq}\varepsilon_{2pq})^f,  $$
 	unfortunately,   here   we conclude nothing. So we will use the norm map over $L_4=\QQ(\sqrt p,   \sqrt q)$ which is $N_{\KK^+/L_4}=1+\sigma_1$.
 	Note that $\{\varepsilon_{p},   \varepsilon_{q},   \sqrt{\varepsilon_{q}\varepsilon_{pq}}\}$  is a FSU of $L_4$. Thus
 	\begin{eqnarray*}
 		N_{\KK^+/L_4}(X^2)&=&(-1)^{a}\cdot\varepsilon_{pq}^{2c}\cdot (-\varepsilon_{q})^d\cdot (-1)^e \cdot (\varepsilon_{pq})^f\\
 		&=&\varepsilon_{pq}^{2c} \cdot (-1)^{a+d+e}\cdot
 		\varepsilon_{q}^d\cdot \varepsilon_{pq}^f>0.
 	\end{eqnarray*}
 	
 	\noindent Since $N_{\KK^+/L_4}(X^2)>0$,   then $a+d+e\equiv0\pmod2$.  By Remark \ref{rmk1 on units},  $2\varepsilon_{q}$ is a square in $\QQ(\sqrt q)$ and $2\varepsilon_{pq}$ is a square in $\QQ(\sqrt p,   \sqrt q)$.
 	So $d=f$,  since otherwise we will get   $\varepsilon_{pq}$ or $\varepsilon_{q}$ is a square in $L_4$. Therefore,
 	$$X^2=\varepsilon_{2}^a \varepsilon_{pq}^c\sqrt{\varepsilon_{q}}^d\sqrt{\varepsilon_{2q}}^e\sqrt{\varepsilon_{pq}\varepsilon_{2pq}}^d.$$
 	Now we     apply $N_{\KK^+/L_5}=1+\sigma_1\sigma_3$,   where $L_5=\QQ(\sqrt p,   \sqrt{2q})$. Note that   a FSU of $L_5$  is given by $\{\varepsilon_{p},   \varepsilon_{2q},   \sqrt{\varepsilon_{2pq}}\}$.
 	We have
 	\begin{eqnarray*}
 		N_{\KK^+/L_5}(X^2)&=&(-1)^{a}\cdot 1 \cdot  (-1)^{d} \cdot (-\varepsilon_{2q})^{e} \cdot \varepsilon_{2pq}^d\\
 		&=&\varepsilon_{2pq}^d \cdot (-1)^{a+d+e} \cdot \varepsilon_{2q}^e>0.
 	\end{eqnarray*}
 	So  $a+d+e\equiv0\pmod2$. As $\varepsilon_{2q}$ is not a square in $L_5$,  then $e=0$ and  $a+d\equiv0\pmod2$. Therefore $a=d$ and
 	$$X^2=\varepsilon_{2}^d \varepsilon_{pq}^c\sqrt{\varepsilon_{q}}^d\sqrt{\varepsilon_{pq}\varepsilon_{2pq}}^d.$$
 	Remark that $\varepsilon_{pq}$ is a square in $\KK^+$,   so we may put
 	$$X^2=\varepsilon_{2}^d\sqrt{\varepsilon_{q}}^d\sqrt{\varepsilon_{pq}\varepsilon_{2pq}}^d.$$
 	Applying the norm  maps from $\KK^+$ to all the rest of its sub-extensions, no contradiction is obtained and we conclude nothing about $d$. So $d=0$ or $1$, and thus the result.	
 \end{proof}

 \newpage
 {\begin{table}[H]
 		\renewcommand{\arraystretch}{2.5}
 		
 		\begin{center}\rotatebox{-90}{
 				\begin{tabular}{|c|c|c|c|c|c|c|c|c|c|}
 					\hline
 					$\varepsilon$ & $\varepsilon^{\sigma_1}$ & $\varepsilon^{\sigma_2}$ & $\varepsilon^{\sigma_3}$ &$\varepsilon^{1+\sigma_1}$ & $\varepsilon^{1+\sigma_2}$ & $\varepsilon^{1+\sigma_3}$ & $\varepsilon^{1+\sigma_1\sigma_2}$& $\varepsilon^{1+\sigma_1\sigma_3}$& $\varepsilon^{1+\sigma_2\sigma_3}$  \\ \hline
 					$\varepsilon_{2}$ & $\frac{-1}{\varepsilon_{2}}$ & $\varepsilon_{2}$ & $\varepsilon_{2}$ & $-1$ & $\varepsilon_{2}^2$ &$\varepsilon_{2}^2$&$-1$ & $-1$ & $\varepsilon_{2}^2$\\ \hline
 					
 					$\varepsilon_{p}$ & $\varepsilon_{p}$ & $\frac{-1}{\varepsilon_{p}}$ & $\varepsilon_{p}$ &$\varepsilon_{p}^2$ &$-1$ & $\varepsilon_{p}^2$ &$-1$& $\varepsilon_{p}^2$ &$-1$\\ \hline
 					$\varepsilon_{pq}$ & $\varepsilon_{pq}$ & $\frac{1}{\varepsilon_{pq}}$ & $\frac{1}{\varepsilon_{pq}}$ & $\varepsilon_{pq}^2$& $1$ & $1$& $1$& $1$ &$\varepsilon_{pq}^2$\\ \hline
 					$\sqrt{\varepsilon_{q}}$ & $-\sqrt{\varepsilon_{q}}$ & $\sqrt{\varepsilon_{q}}$ & $\frac{1}{\sqrt{\varepsilon_{q}}}$ & $-\varepsilon_{q}$ & $\varepsilon_{q}$ &$1$&$-\varepsilon_{q}$&$-1$&$1$\\ \hline
 					$\sqrt{\varepsilon_{2q}}$ & $\frac{-1}{\sqrt{\varepsilon_{2q}}}$ & $\sqrt{\varepsilon_{2q}}$ & $\frac{1}{\sqrt{\varepsilon_{2q}}}$ &$-1$ & $\varepsilon_{2q}$ & $1$ &$-1$& $-\varepsilon_{2q}$& $1$\\ \hline
 					
 					$\sqrt{\varepsilon_{pq}}$ &$-\sqrt{\varepsilon_{pq}}$&$\frac{1}{\sqrt{\varepsilon_{pq}}}$ &$\frac{-1}{\sqrt{\varepsilon_{pq}}}$& $-{\varepsilon_{pq}}$ &$1$ & $-1$ & $-1$& $1$ & $-\varepsilon_{pq}$ \\ \hline
 					
 					$\sqrt{\varepsilon_{2pq}}$ &$\frac{-1}{\sqrt{\varepsilon_{2pq}}}$&$\frac{1}{\sqrt{\varepsilon_{2pq}}}$ &$\frac{-1}{\sqrt{\varepsilon_{2pq}}}$& $-1$ &$1$ & $-1$ & $-\varepsilon_{2pq}$& $\varepsilon_{2pq}$ & $-\varepsilon_{2pq}$ \\ \hline
 					
 					$\sqrt{\varepsilon_{2}\varepsilon_{p}\varepsilon_{2p}}$ & $(-1)^u\sqrt{\frac{\varepsilon_{p}}{\varepsilon_{2}\varepsilon_{2p}}}$ &  $(-1)^v\sqrt{\frac{\varepsilon_{2}}{\varepsilon_{p}\varepsilon_{2p}}}$ &
 					$\sqrt{\varepsilon_{2}\varepsilon_{p}\varepsilon_{2p}}$ & $(-1)^u\varepsilon_{p}$&$(-1)^v\varepsilon_{2}$ & $\varepsilon_{2}\varepsilon_{p}\varepsilon_{2p}$ &  && \\
 					\hline

 					$\sqrt[4]{\varepsilon_{2}^2\varepsilon_{q}\varepsilon_{pq}\varepsilon_{2pq}}$ & $(-1)^k\sqrt[4]{\frac{\varepsilon_{q}\varepsilon_{pq}}{\varepsilon_{2}^2\varepsilon_{2pq}}}$ &  $(-1)^t\sqrt[4]{\frac{\varepsilon_{2}^2\varepsilon_{q}}{\varepsilon_{pq}\varepsilon_{2pq}}}$ &
 					$(-1)^r\sqrt[4]{\frac{\varepsilon_{2}^2}{\varepsilon_{q}\varepsilon_{pq}\varepsilon_{2pq}}}$ & $(-1)^k\sqrt{\varepsilon_{q}\varepsilon_{pq}}$&$(-1)^t\varepsilon_{2}\sqrt{\varepsilon_{q}}$ &  $(-1)^r\varepsilon_{2}$ &  && \\
 					\hline
 					
 				\end{tabular}
 			}\caption{Image of units by $\sigma_i$}\label{table1}	\end{center}
 \end{table} }
 
 \begin{lemma}\label{thm units case T}
 	Suppose that the unit group of  $\KK^+$ takes the form in the second item of   Lemma $\ref{29}$. Then
 	the unit group of $\KK$ is one of the following.
 	\begin{enumerate}[\rm1.]
 		\item  $E_{\mathbb{K} }=\langle \zeta_8,  \varepsilon_{2},  \varepsilon_{p},  \sqrt{\varepsilon_{q}},  \sqrt{\varepsilon_{2q}},  \sqrt{\varepsilon_{pq}},   \sqrt{\varepsilon_{2}\varepsilon_{p}\varepsilon_{2p}},
 		\sqrt[4]{\varepsilon_{2}^2\varepsilon_{q}\varepsilon_{pq}\varepsilon_{2pq}}\rangle$,  or
 		\item $E_{\mathbb{K} }=\langle   \zeta_8,  \varepsilon_{2},  \varepsilon_{p},  \sqrt{\varepsilon_{q}} ,  \sqrt{\varepsilon_{pq}},   \sqrt{\varepsilon_{2}\varepsilon_{p}\varepsilon_{2p}},
 		\sqrt[4]{\varepsilon_{2}^2\varepsilon_{q}\varepsilon_{pq}\varepsilon_{2pq}},   \sqrt[4]{ \zeta_8^2\varepsilon_{q}\varepsilon_{2 q}}\rangle.$
 	\end{enumerate}
 \end{lemma}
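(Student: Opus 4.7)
The plan is to apply Azizi's lemma (Lemma \ref{Lemme azizi}) to the quadratic extension $\KK/\KK^+$ with $K_0=\KK^+$. The first step is to identify $n_0$, the largest integer with $\zeta_{2^{n_0}}\in\KK$. Since $\zeta_8 = (1+i)/\sqrt 2 \in\KK$ we have $n_0\geq 3$; to show $n_0=3$ we check that $\sqrt{2+\sqrt 2}\notin\KK^+$. The Klein four extension $\KK^+/\QQ(\sqrt 2)$ has exactly three proper quadratic subextensions, namely $\QQ(\sqrt 2,\sqrt p)$, $\QQ(\sqrt 2,\sqrt q)$ and $\QQ(\sqrt 2,\sqrt{pq})$, so it suffices to verify that $2+\sqrt 2$ is not of the form $\alpha^2 m$ with $\alpha\in\QQ(\sqrt 2)$ and $m\in\{1,p,q,pq\}$. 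Taking $N_{\QQ(\sqrt 2)/\QQ}(2+\sqrt 2)=2$ makes this immediate, since none of the equations $2 = m^2 N(\alpha)^2$ is solvable in rationals.

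With $n_0=3$, $\mu_3=\sqrt 2$ and $2+\mu_3 = \sqrt 2\,\varepsilon_2$ (using $\varepsilon_2=1+\sqrt 2$), Azizi's lemma produces a FSU of $\KK$ from the FSU of $\KK^+$ given in item 2 of Lemma \ref{29} in exactly one of two ways. Either no unit $\varepsilon\in E_{\KK^+}$ makes $(2+\sqrt 2)\varepsilon$ a square in $\KK^+$ and we merely adjoin $\zeta_8$ without any replacement, giving item 1; or such a unit $\varepsilon$ exists, and one fundamental unit of $\KK^+$ is replaced by $\sqrt{\zeta_8\,\varepsilon}$ in the FSU of $\KK$.

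To establish item 2 I would take $\varepsilon_r = \sqrt{\varepsilon_{2q}}$ in the formulation of Azizi's lemma and exhibit $\varepsilon = \sqrt{\varepsilon_q}\cdot\sqrt{\varepsilon_{2q}} = \sqrt{\varepsilon_q\varepsilon_{2q}}$ as the working unit, since then
$$\sqrt{\zeta_8\,\varepsilon} \;=\; \sqrt{\zeta_8\sqrt{\varepsilon_q\varepsilon_{2q}}} \;=\; \sqrt[4]{\zeta_8^2\,\varepsilon_q\,\varepsilon_{2q}},$$
which is exactly the eighth generator appearing in item 2. To verify that $(2+\sqrt 2)\sqrt{\varepsilon_q\varepsilon_{2q}}$ is indeed a square in $\KK^+$, I would use Lemma \ref{lm expressions of units}: the identities $\sqrt{2\varepsilon_q} = d_1'+d_2'\sqrt q$, $\sqrt{2\varepsilon_{2q}} = d_1+d_2\sqrt{2q}$, together with $\sqrt{2\varepsilon_q}\cdot\sqrt{2\varepsilon_{2q}} = 2\sqrt{\varepsilon_q\varepsilon_{2q}}$, give
$$(2+\sqrt 2)\sqrt{\varepsilon_q\varepsilon_{2q}} \;=\; \tfrac{\sqrt 2\,\varepsilon_2}{2}\,(d_1'+d_2'\sqrt q)(d_1+d_2\sqrt{2q}),$$
and after substituting $2 = d_1'^2-qd_2'^2 = d_1^2-2qd_2^2$ and $\varepsilon_2=1+\sqrt 2$ one can read off an explicit square root in $\KK^+$.

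The main obstacle is ruling out every other choice of $\varepsilon$ among the $2^7$ products of FSU generators of $\KK^+$. This exhaustion is carried out exactly as in the proof of Lemma \ref{29}: assuming $(2+\sqrt 2)\varepsilon$ is a square, one successively applies the norms $N_{\KK^+/L_i}$ to the biquadratic sub-extensions $L_1,\dots,L_5$ using the action of $\sigma_1,\sigma_2,\sigma_3$ summarized in Table \ref{table1}, together with the squareness obstructions coming from Lemma \ref{lemma units of biquadratic fields} and Remark \ref{rmk1 on units}. Each norm forces several exponents in the expansion of $\varepsilon$ to vanish, and the only configuration that survives every test is $\varepsilon = \sqrt{\varepsilon_q\varepsilon_{2q}}$. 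Since Azizi's lemma guarantees that at most one $\varepsilon$ can make the construction succeed, this precisely recovers the two alternatives of the statement.
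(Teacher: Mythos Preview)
Your approach matches the paper's: apply Lemma \ref{Lemme azizi} with $n_0=3$ and eliminate every candidate for $\varepsilon$ except $\sqrt{\varepsilon_q}\sqrt{\varepsilon_{2q}}$ by successively applying the norms $N_{\KK^+/L_2}$, $N_{\KK^+/L_4}$, $N_{\KK^+/L_3}$, $N_{\KK^+/L_5}$ using Table \ref{table1}. Your attempt to verify directly that $(2+\sqrt 2)\sqrt{\varepsilon_q\varepsilon_{2q}}$ is actually a square in $\KK^+$ is superfluous for this lemma---the statement is a disjunction, the paper makes no such verification here, and which alternative holds is settled only in Theorem \ref{thm main theorem on units} via the class-number formula forcing $q(\KK)=2^8$.
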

 \begin{proof}
 	We shall make use of  Table  \ref{table1} page \pageref{table1},  and  respect the same notations of the proof of the previous Lemma \ref{29}.	According to Lemma \ref{Lemme azizi}, set
 	$$Y^2=(2+\sqrt{2})\varepsilon_{2}^a\varepsilon_{p}^b\sqrt{\varepsilon_{q}}^c\sqrt{\varepsilon_{2q}}^d\sqrt{\varepsilon_{pq}}^e
 	\sqrt{\varepsilon_{2}\varepsilon_{p}\varepsilon_{2p}}^f\sqrt[4]{\varepsilon_{2}^2\varepsilon_{q}\varepsilon_{pq}\varepsilon_{2pq}}^g.$$
 	We have $N_{\KK^+/L_2}=1+\sigma_2$. So
 	\begin{eqnarray*}
 		N_{\KK^+/L_2}(Y^2)&=&(2+\sqrt{2})^2\cdot  \varepsilon_{2}^{2a}\cdot (-1)^b\cdot  \varepsilon_{q}^c \cdot \varepsilon_{2q}^d\cdot 1\cdot (-1)^{fv}\cdot  \varepsilon_{2}^f\cdot  (-1)^{gt}\cdot  \varepsilon_{2}^{g}\cdot  \sqrt{\varepsilon_{q}}^g,  \\	
 		&=&(2+\sqrt{2})^2\cdot\varepsilon_{2}^{2a}\cdot \varepsilon_{q}^c\cdot\varepsilon_{2q}^d\cdot (-1)^{fv+b+gt}\cdot \varepsilon_{2}^{g+f}\cdot \sqrt{\varepsilon_{q}}^g>0,
 	\end{eqnarray*}
 	We have $fv+b+gt=0\pmod 2$. Recall that  a $\mathrm{FSU}$  of $L_2$ is $\{\varepsilon_{2},   \sqrt{\varepsilon_{q}},   \sqrt{\varepsilon_{2q}}\}$, so 
 	\begin{enumerate}[\rm $\bullet$]
 		\item the case $g=0$ and $f=1$ is impossible. In fact $\sqrt{\varepsilon_{2}}\not\in L_2$,
 		\item the case $g=1$ and $f=0$ is impossible too. In fact $\sqrt{\varepsilon_{2}\sqrt{\varepsilon_{q}}}\not\in L_2$,
 		\item the case $g=1$ and $f=1$ is impossible too. In fact $\sqrt[4]{ \varepsilon_{q}}\not\in L_2$.
 	\end{enumerate}
 	It follows that   $f=g=0 $ and  $b=0$.	Thus,  	
 	$$Y^2=(2+\sqrt{2})\varepsilon_{2}^a \sqrt{\varepsilon_{q}}^c\sqrt{\varepsilon_{2q}}^d\sqrt{\varepsilon_{pq}}^e.$$
 	We have $N_{\KK^+/L_4}=1+\sigma_1$. So	
 	\begin{eqnarray*}
 		N_{\KK^+/L_4}(Y^2)&=&(4-2)\cdot {(-1)}^{a} \cdot {(-1)}^c\cdot      \varepsilon_q^c \cdot {(-1)}^d\cdot      ({-1})^e\cdot \varepsilon_{pq}^e,  \\	
 		&=& (-1)^{a+c+d+e} \cdot 2\cdot \varepsilon_q^c \varepsilon_{pq}^e  >0.
 	\end{eqnarray*}
 	We have $a+c+d+e=0\pmod2$. Note that $\{\varepsilon_{p},   \varepsilon_{q},   \sqrt{\varepsilon_{q}\varepsilon_{pq}}\}$  is a FSU of $L_4$. Since $2$ is not a square in $L_4$,  then $e\not=c$. It follows that $a+d+1=0\pmod2$. Therefore,  $a\not=d$ and $e\not=c$. To summarize we have
 	$$Y^2=(2+\sqrt{2})\varepsilon_{2}^a \sqrt{\varepsilon_{q}}^c\sqrt{\varepsilon_{2q}}^d\sqrt{\varepsilon_{pq}}^e, $$
 	with $a\not=d$ and $e\not=c$.
 	Let us now apply $	N_{\KK^+/L_3}=1+\sigma_{2}\sigma_{3}$. So
 	\begin{eqnarray*}
 		N_{\KK^+/L_3}(Y^2)&=&(2+\sqrt{2})^2 \cdot \varepsilon_{2}^{2a}\cdot 1\cdot 1\cdot (-1)^{e} \cdot \sqrt{\varepsilon_{pq}}^e>0.  \\	
 	\end{eqnarray*}
 	Thus,  $e=0$ and so $c=1$. Then we have
 	$$Y^2=(2+\sqrt{2})\varepsilon_{2}^a \sqrt{\varepsilon_{q}}\sqrt{\varepsilon_{2q}}^d.$$
 	By applying $	N_{\KK^+/L_5}=1+\sigma_{1}\sigma_{3}$ with $\mathbb{Q}(\sqrt{p}, \sqrt{2q})$,  we get
 	\begin{eqnarray*}
 		N_{\KK^+/L_5}(Y^2)&=&(4-2)\cdot (-1)^a\cdot (-1) \cdot    (-1)^d \cdot\varepsilon_{2q}^{d}\\
 		&= &  (-1)^{a+d+1}\cdot 2\cdot \varepsilon_{2q}^d>0.  	
 	\end{eqnarray*}
 	Since $2$ is not a square in $L_5$,  then $d=1$ and so $a=0$. Thus  $Y^2=(2+\sqrt{2}) \sqrt{\varepsilon_{q}}\sqrt{\varepsilon_{2q}}$. So the results are gotten  by applying Lemma \ref{Lemme azizi}. 	
 \end{proof}
 
 \noindent Now we are able to state and  prove the main theorem of this section.
 \begin{theorem}  \label{thm main theorem on units}
 	Let $p$ and $q$ be two primes satisfying   conditions   ${\eqref{cond 1}}$. Let $\KK=\QQ(\sqrt 2,  \sqrt{p},  \sqrt{q},  \sqrt{-1})$  and
 	$\KK^+=\QQ(\sqrt 2,  \sqrt{p},  \sqrt{q})$. Then we have:
 	\begin{enumerate}[\rm 1.]
 		\item $E_{\mathbb{K}^+}=\langle-1,  \varepsilon_{2},  \varepsilon_{p},  \sqrt{\varepsilon_{q}},  \sqrt{\varepsilon_{2q}},  \sqrt{\varepsilon_{pq}},   \sqrt{\varepsilon_{2}\varepsilon_{p}\varepsilon_{2p}},
 		\sqrt[4]{\varepsilon_{2}^2\varepsilon_{q}\varepsilon_{pq}\varepsilon_{2pq}}\rangle.$		
 		\item $E_{\mathbb{K} }=\langle   \zeta_8,  \varepsilon_{2},  \varepsilon_{p},  \sqrt{\varepsilon_{q}} ,  \sqrt{\varepsilon_{pq}},   \sqrt{\varepsilon_{2}\varepsilon_{p}\varepsilon_{2p}},
 		\sqrt[4]{\varepsilon_{2}^2\varepsilon_{q}\varepsilon_{pq}\varepsilon_{2pq}},   \sqrt[4]{ \zeta_8^2\varepsilon_{q}\varepsilon_{2 q}}\rangle.$				
 	\end{enumerate}		
 \end{theorem}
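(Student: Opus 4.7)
The plan is to show that in each of Lemmas \ref{29} and \ref{thm units case T} we are in the larger of the two alternatives. Both lemmas narrow the unit group down to a pair of candidates differing by a single fourth-root generator, so the theorem amounts to showing that each fourth-root lies in the respective field.

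For item 1, by Lemma \ref{29} it suffices to exhibit $\delta := \varepsilon_2\sqrt{\varepsilon_q\varepsilon_{pq}\varepsilon_{2pq}}$ as a square in $\KK^+$. I substitute the factorizations from Lemma \ref{lm expressions of units},
\[\sqrt{2\varepsilon_q} = d_1' + d_2'\sqrt q,\qquad \sqrt{2\varepsilon_{pq}} = b_1\sqrt p + b_2\sqrt q,\qquad \sqrt{2\varepsilon_{2pq}} = y_1\sqrt{2p} + y_2\sqrt q,\]
to obtain
\[\delta = \frac{2+\sqrt 2}{4}\,(d_1'+d_2'\sqrt q)(b_1\sqrt p + b_2\sqrt q)(y_1\sqrt{2p} + y_2\sqrt q).\]
Expanding the triple product gives a combination of the eight monomials in $\{1,\sqrt 2,\sqrt p,\sqrt q,\sqrt{2p},\sqrt{2q},\sqrt{pq},\sqrt{2pq}\}$. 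After multiplication by $(2+\sqrt 2)/4$ and collection of terms, I match the result against the square of an explicit $Z = \alpha + \beta\sqrt{2} + \gamma\sqrt p + \cdots \in \KK^+$, using the norm identities $d_1'^2 - qd_2'^2 = -pb_1^2 + qb_2^2 = -2py_1^2 + qy_2^2 = 2$ to reconcile the cross-terms.

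For item 2, item 1 lets me invoke Lemma \ref{thm units case T}, which reduces the statement to verifying that $\sqrt[4]{\zeta_8^2\varepsilon_q\varepsilon_{2q}} \in \KK$; via Lemma \ref{Lemme azizi} this is equivalent to $(2+\sqrt 2)\sqrt{\varepsilon_q}\sqrt{\varepsilon_{2q}}$ being a square in $\KK^+$. Writing $\sqrt{2\varepsilon_q} = d_1' + d_2'\sqrt q$ and $\sqrt{2\varepsilon_{2q}} = d_1 + d_2\sqrt{2q}$, the question reduces to whether
\[\tfrac{1}{2}(2+\sqrt 2)(d_1' + d_2'\sqrt q)(d_1 + d_2\sqrt{2q})\in\QQ(\sqrt 2,\sqrt q)\]
is a square in $\KK^+$. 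I would expand in the basis $\{1,\sqrt 2,\sqrt q,\sqrt{2q}\}$ and identify the square structure using $d_1'^2 - qd_2'^2 = d_1^2 - 2qd_2^2 = 2$. If the expression is already a square in $\QQ(\sqrt 2,\sqrt q)$ its square root lies in $\KK^+$; otherwise it is $p$ times such a square, and the square root still lies in $\KK^+$.

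The main obstacle is the algebraic bookkeeping in item 1: eight monomials and several constrained integer parameters leave little room for brute-force matching, so finding the right ansatz $Z$ is delicate. A cleaner fallback, to which I would turn if the explicit identification proves unwieldy, is Wada's class number formula applied to $\KK$: combining $h_2(\KK) = 2^m$ from Theorem \ref{class group of L*} with the known $2$-class numbers of the $15$ quadratic subfields pins down $q(\KK)$, which distinguishes the two options of Lemma \ref{thm units case T}, and in turn, through its relation to $q(\KK^+)$, selects the correct option in Lemma \ref{29}.
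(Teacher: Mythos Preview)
Your fallback is precisely the paper's argument, and the paper uses it as the sole method: Wada's formula together with the $2$-class numbers of the fifteen quadratic subfields gives $h_2(\KK)=\tfrac{1}{2^{8}}\,q(\KK)\,h_2(-2q)$, and Theorem~\ref{class group of L*} forces $q(\KK)=2^{8}$; each of the smaller alternatives in Lemmas~\ref{29} and~\ref{thm units case T} is then eliminated by checking that it would yield $q(\KK)\le 2^{7}$. The paper makes no attempt at the explicit square identification you sketch first, and your own description stops short of producing the ansatz~$Z$ or verifying the match, so that route remains a plan rather than a proof. If it could be made to work it would have the merit of being self-contained (no appeal to $h_2(\KK)$), but the index argument is far shorter.

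One ordering issue in your fallback: Lemma~\ref{thm units case T} is stated only under the hypothesis that $E_{\KK^+}$ already has the form in item~2 of Lemma~\ref{29}, so you cannot invoke it to distinguish the $\KK$-options before settling $\KK^+$. The paper handles this in the correct order: it first rules out option~1 of Lemma~\ref{29} directly (if $E_{\KK^+}$ were the smaller group, Lemma~\ref{Lemme azizi} applied to $\KK/\KK^+$ would give $q(\KK)\le 2^{7}$), and only then applies Lemma~\ref{thm units case T} to eliminate its option~1 by the same index count.
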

 \begin{proof}	
 	Under conditions   ${\eqref{cond 1}}$,	we have $h_2(p)=h_2(q)=h_2(-q)=h_2(2q)=h_2(-1)=h_2(2)=h_2(-2)=1$,
 	$h_2(-p)=h_2(2p)=h_2(-2p)=h_2(pq)=h_2(-pq)=h_2(2pq)=2$ and $h_2(-2pq)=4$ (cf. \cite{connor88,  kaplan76}). So by Wada's class number formula (cf. \cite{wada}) one gets
 	$$\begin{array}{ll}
 	h_2(\KK)&=\frac{1}{2^{16}}q(\KK) h_2(-1) h_2(2) h_2(-2) h_2(p) h_2(-p) h_2(q)\\
 	&\qquad h_2(-q) h_2(2p)h_2(-2p) h_2(2q)h_2(-2q) h_2(pq) h_2(-pq) h_2(2pq) h_2(-2pq)\\
 	&=\frac{1}{2^{16}}\cdot q(\KK)\cdot 1\cdot 1 \cdot 1 \cdot 1 \cdot 2 \cdot 1 \cdot 1 \cdot 2 \cdot 2\cdot 1\cdot h_2(-2q)\cdot 2\cdot 2\cdot 2\cdot 4,  \\
 	&= \frac{1}{2^{8}}.q(\KK).h_2(-2q).
 	\end{array}$$
 	On the other hand, by Theorem \ref{class group of L*} we have  $h_2(\KK)=h_2(-2q)$. So obviously we must have  $q(\KK)= 2^{8}$.
 	\begin{enumerate}[\rm $\bullet$]
 		\item Suppose that the unit group of $\KK^+ $ takes the form in the first item of  Lemma \ref{29},  then a FSU of $\KK^+ $ is
 		$$ \{ \varepsilon_{2},  \varepsilon_{p},  \sqrt{\varepsilon_{q}},  \sqrt{\varepsilon_{2q}},  \sqrt{\varepsilon_{pq}},  \sqrt{\varepsilon_{2pq}},  \sqrt{\varepsilon_{2}\varepsilon_{p}\varepsilon_{2p}} \} =\{\alpha_1, \alpha_2, ..., \alpha_7 \}.$$ 
 		Thus by  Lemma \ref{Lemme azizi} a FSU of $\KK$ is $\{\alpha_1, \alpha_2, ..., \alpha_7 \}$  or $\{ \alpha_{i_1}, ..., \alpha_{i_5},  \alpha_{i_0},  \sqrt{\zeta_8\alpha} \}$ with  $i_k\in \{1, ..., 7\}$ and
 		$\alpha=\alpha_1^{r_1}\alpha_2^{r_2}\cdots\alpha_7^{r_7}$,  where $r_k\in\{0, 1\} $, and $\alpha_{i_0}\in \{\varepsilon_{2}, \varepsilon_{p}\}$,  for some $i_0$. Thus  $q(\KK)\leq 2^{7}$. Which is absurd.
 		\item Assume now that  the unit group of  $\KK$ takes the form in the first item of  Lemma \ref{thm units case T},  then $q(\KK)\leq 2^{7}$. which is also absurd.
 	\end{enumerate}
 	Thus the only possible case is the one which is given by the second item of  Lemma \ref{thm units case T}. Which completes the proof.
 \end{proof}
 
 \section{$2$-class field towers of some multiquadratic number fields}\label{section 2}~\\
 Keep the notations of the previous sections. Now we can investigate  the structure of the second $2$-class groups of $L_{pq}$ and $F_{pq}$ (i.e. $G_{L_{pq}}$ and $G_{F_{pq}}$) defined in section \ref{section 01}.
 
 \begin{lemma}\label{norms  case T} Let $p$ and $q$ be two primes satisfying    conditions {\eqref{cond 1}}. Then
 	\begin{enumerate}[$\bullet$]
 		\item    $N_{L_{pq}^*/L_{pq}}(\sqrt{\varepsilon_{q}})= 1$ and    $N_{L_{pq}^*/L_{pq}}(\sqrt{\varepsilon_{pq}})=-\varepsilon_{pq}$.
 		\item   $N_{L_{pq}^*/L_{pq}}(\varepsilon_{2})=\varepsilon_{2}^2$,  $N_{L_{pq}^*/L_{pq}}(\zeta_8)=i$ and $N_{L_{pq}^*/L_{pq}}(\varepsilon_{p})=-1$.
 		\item $N_{L_{pq}^*/L_{pq}}(\sqrt{\varepsilon_{2}\varepsilon_{p}\varepsilon_{2p}})=\pm \varepsilon_{2}$,  $N_{L_{pq}^*/L_{pq}}(\sqrt[4]{\zeta_8^2	\varepsilon_{q}\varepsilon_{2q}})=\pm\zeta_8 $ and   $N_{L_{pq}^*/L_{pq}}(\sqrt[4]{\varepsilon_{2}^2{\varepsilon_{q}}{\varepsilon_{pq}\varepsilon_{2pq}}})=\pm \varepsilon_{2}\sqrt{\varepsilon_{pq}\varepsilon_{2pq}}$.
 	\end{enumerate}
 \end{lemma}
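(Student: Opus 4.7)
The plan is to identify the Galois group $\mathrm{Gal}(L_{pq}^{*}/L_{pq})$ and then read each norm off Table~\ref{table1}. Extend the automorphisms $\sigma_{2},\sigma_{3}$ of Lemma~\ref{29} to $\mathbb{K}$ by letting them fix $i$. Then $\sigma_{2}\sigma_{3}$ fixes $\sqrt{2}$, $i$ and $\sqrt{pq}=\sqrt{p}\sqrt{q}$, while sending $\sqrt{p}\mapsto-\sqrt{p}$; hence it is the nontrivial element of $\mathrm{Gal}(L_{pq}^{*}/L_{pq})$, and for every $u\in E_{\mathbb{K}}$ one has $N_{L_{pq}^{*}/L_{pq}}(u)=u\cdot u^{\sigma_{2}\sigma_{3}}$.

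For $\varepsilon_{2}$, $\varepsilon_{p}$, $\sqrt{\varepsilon_{q}}$ and $\sqrt{\varepsilon_{pq}}$ the norms can be obtained immediately by reading the column $\varepsilon^{1+\sigma_{2}\sigma_{3}}$ of Table~\ref{table1}, producing the four claimed values $\varepsilon_{2}^{2}$, $-1$, $1$ and $-\varepsilon_{pq}$. For $\sqrt{\varepsilon_{2}\varepsilon_{p}\varepsilon_{2p}}$, combining the $\sigma_{2}$ and $\sigma_{3}$ columns of that same table yields $N_{L_{pq}^{*}/L_{pq}}\bigl(\sqrt{\varepsilon_{2}\varepsilon_{p}\varepsilon_{2p}}\bigr)=(-1)^{v}\varepsilon_{2}=\pm\varepsilon_{2}$. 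For $\zeta_{8}=(1+i)/\sqrt{2}$, both $\sqrt{2}$ and $i$ are fixed by $\sigma_{2}\sigma_{3}$, so $\sigma_{2}\sigma_{3}(\zeta_{8})=\zeta_{8}$ and consequently $N_{L_{pq}^{*}/L_{pq}}(\zeta_{8})=\zeta_{8}^{2}=i$.

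The two fourth-root generators require a little more care: one first computes the action on the fourth power and then extracts a root. For $\beta=\sqrt[4]{\varepsilon_{2}^{2}\varepsilon_{q}\varepsilon_{pq}\varepsilon_{2pq}}$, note that $\sigma_{2}\sigma_{3}$ fixes $\varepsilon_{2},\varepsilon_{pq},\varepsilon_{2pq}$ (each of which lies in $L_{pq}$) and inverts $\varepsilon_{q}$; consequently $(\beta\,\beta^{\sigma_{2}\sigma_{3}})^{4}=\varepsilon_{2}^{4}\varepsilon_{pq}^{2}\varepsilon_{2pq}^{2}$, from which $N_{L_{pq}^{*}/L_{pq}}(\beta)=\pm\varepsilon_{2}\sqrt{\varepsilon_{pq}\varepsilon_{2pq}}$ follows after extracting fourth (and then square) roots. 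For $\gamma=\sqrt[4]{\zeta_{8}^{2}\varepsilon_{q}\varepsilon_{2q}}$, observe that $\sigma_{2}\sigma_{3}$ fixes $\zeta_{8}^{2}=i$ and inverts both $\varepsilon_{q}$ and $\varepsilon_{2q}$; writing $\gamma^{2}=\pm\zeta_{8}\sqrt{\varepsilon_{q}\varepsilon_{2q}}$ and using $\sigma_{2}\sigma_{3}(\sqrt{\varepsilon_{q}\varepsilon_{2q}})=1/\sqrt{\varepsilon_{q}\varepsilon_{2q}}$, one gets the sharper relation $(\gamma\,\gamma^{\sigma_{2}\sigma_{3}})^{2}=\zeta_{8}^{2}=i$, which pins $N_{L_{pq}^{*}/L_{pq}}(\gamma)$ down to $\pm\zeta_{8}$.

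The only real difficulty is the sign bookkeeping that comes from the implicit choices of square and fourth roots; since the statement itself allows $\pm$ signs throughout, this amounts to routine verification rather than a substantive obstacle, and no ingredient beyond Table~\ref{table1} and the identification of $\sigma_{2}\sigma_{3}$ as the nontrivial element of $\mathrm{Gal}(L_{pq}^{*}/L_{pq})$ is needed.
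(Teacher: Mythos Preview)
Your approach is correct and a little more economical than the paper's. You identify the nontrivial element of $\mathrm{Gal}(L_{pq}^{*}/L_{pq})$ as $\sigma_{2}\sigma_{3}$ (extended to fix $i$) and then read most of the norms straight from the column $\varepsilon^{1+\sigma_{2}\sigma_{3}}$ of Table~\ref{table1}. The paper does not make this identification explicit; instead it returns to the explicit radical expressions of Lemma~\ref{lm expressions of units} (e.g.\ $\sqrt{2\varepsilon_{q}}=d_{1}'+d_{2}'\sqrt{q}$, $\sqrt{2\varepsilon_{pq}}=b_{1}\sqrt{p}+b_{2}\sqrt{q}$) and multiplies each element by its conjugate by hand. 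For the two fourth-root generators the paper first computes the norm of the \emph{square} (e.g.\ $N(\zeta_{8}\sqrt{\varepsilon_{q}}\sqrt{\varepsilon_{2q}})=\zeta_{8}^{2}$ and $N(\varepsilon_{2}\sqrt{\varepsilon_{q}}\sqrt{\varepsilon_{pq}\varepsilon_{2pq}})=\varepsilon_{2}^{2}\varepsilon_{pq}\varepsilon_{2pq}$) and then extracts a single square root, giving only a $\pm$ ambiguity. Your route reuses work already encoded in Table~\ref{table1}; the paper's route is more self-contained but repetitive.

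One small gap in your argument for $\beta=\sqrt[4]{\varepsilon_{2}^{2}\varepsilon_{q}\varepsilon_{pq}\varepsilon_{2pq}}$: from $(N(\beta))^{4}=\varepsilon_{2}^{4}\varepsilon_{pq}^{2}\varepsilon_{2pq}^{2}$ you only get $N(\beta)\in\{\pm\varepsilon_{2}\sqrt{\varepsilon_{pq}\varepsilon_{2pq}},\ \pm i\,\varepsilon_{2}\sqrt{\varepsilon_{pq}\varepsilon_{2pq}}\}$, since $i\in L_{pq}$. To pin it down to $\pm$, either note that $\beta\in\mathbb{K}^{+}$ is real and $\sigma_{2}\sigma_{3}$ preserves $\mathbb{K}^{+}$, so $N(\beta)$ is real; or, as the paper does, compute $N(\beta^{2})$ directly (using $N(\sqrt{\varepsilon_{q}})=1$ and $\sqrt{\varepsilon_{pq}\varepsilon_{2pq}}\in L_{pq}$) and take a single square root. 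You already handle $\gamma$ this way, so the fix is routine.
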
		
 \begin{proof}
 	We shall use  Lemma \ref{lm expressions of units}  and keep its notations. Note that   $\{\varepsilon_{2},   \varepsilon_{pq},
 	\sqrt{\varepsilon_{pq}\varepsilon_{2pq}}\}$ is a FSU of $L_{pq}$. 
 	\begin{enumerate}[$\bullet$]
 		\item     	$N_{L_{pq}^*/L_{pq}}(\sqrt{\varepsilon_{q}})=\frac{1}{\sqrt{2}}(d_1'+d_2'\sqrt{q})\cdot \frac{1}{\sqrt{2}}(d_1'-d_2'\sqrt{q})=1$.\\
 		$N_{L_{pq}^*/L_{pq}}(\sqrt{\varepsilon_{pq}})=\frac{1}{\sqrt{2}}(b_1\sqrt{2p}+b_2\sqrt{q})\cdot \frac{1}{\sqrt{2}}(-b_1\sqrt{2p}-b_2\sqrt{q})
 		=-\sqrt{\varepsilon_{pq}}\cdot\sqrt{\varepsilon_{pq}}=- \varepsilon_{pq}$.
 		\item The norms in the second point are direct.
 		\item  We have $N_{L_{pq}^*/L_{pq}}( {\varepsilon_{2}\varepsilon_{p}\varepsilon_{2p}})=\varepsilon_{2}^2\cdot (-1)\cdot (-1)$. Thus,
 		$N_{L_{pq}^*/L_{pq}}(\sqrt{\varepsilon_{2}\varepsilon_{p}\varepsilon_{2p}})=\pm \varepsilon_{2}$. \\
 		Since $N_{L_{pq}^*/L_{pq}}(\sqrt{\varepsilon_{2q}})=\frac{1}{\sqrt{2}}(d_1+d_2\sqrt{2q})\cdot \frac{1}{\sqrt{2}}(d_1-d_2\sqrt{2q})=1$,  then
 		$N_{L_{pq}^*/L_{pq}}(\zeta_8\sqrt{ 	\varepsilon_{q}}\sqrt{\varepsilon_{2q}})=\zeta_8^2\cdot 1\cdot 1$. Thus $N_{L_{pq}^*/L_{pq}}(\sqrt[4]{\zeta_8^2	\varepsilon_{q}\varepsilon_{2q}})=\pm\zeta_8 $.\\
 		$N_{L_{pq}^*/L_{pq}}(\varepsilon_{2}\sqrt{{\varepsilon_{q}}}\sqrt{{\varepsilon_{pq}}\varepsilon_{2pq}})=  \varepsilon_{2}^2\cdot 1 \cdot  \varepsilon_{pq}\cdot\varepsilon_{2pq}$.
 		Thus,  $N_{L_{pq}^*/L_{pq}}(\sqrt[4]{\varepsilon_{2}^2{\varepsilon_{q}}{\varepsilon_{pq}\varepsilon_{2pq}}})=\pm \varepsilon_{2}\sqrt{\varepsilon_{pq}\varepsilon_{2pq}}$.
 	\end{enumerate}
 \end{proof}
 \begin{theorem}\label{thm G_L quaternion}Let $p$ and $q$ be two primes satisfying    conditions {\eqref{cond 1}} or $ {\eqref{cond 2}} $ and let $p'$ be a prime satisfying  conditions  $\textcolor{blue}{\eqref{cond 3}}$. Let $m\geq 2$ be an integer such that $h_2(-2q)=2^m$. Then
 	the group $G_{L_{pq}}\simeq Q_{m+1}$ and  the group $G_{L_{p'}}$ is  of type  $(2,  2)$.

 \end{theorem}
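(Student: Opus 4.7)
The proof splits into the two assertions. For $G_{L_{pq}}\simeq Q_{m+1}$, I would first compute $|G_{L_{pq}}|$ via Proposition~\ref{prop the order of G_k} applied to $(K,L)=(L_{pq},L_{pq}^*)$. The genus field $L_{pq}^*$ is an unramified abelian $2$-extension of $L_{pq}$, and the corollary following Theorem~\ref{class group of L*} provides that $G_{L_{pq}^*}$ is abelian; hence ${L_{pq}^*}^{(1)}=L_{pq}^{(2)}$ and
$$|G_{L_{pq}}| = [L_{pq}^*:L_{pq}]\cdot h_2(L_{pq}^*) = 2\cdot 2^m = 2^{m+1}.$$
Combined with $\mathrm{Cl}_2(L_{pq})\simeq (2,2)$ and $|G_{L_{pq}}|\geq 8$, the structure theory of Section~\ref{section 0} restricts $G_{L_{pq}}$ to one of $Q_{m+1}$, $D_{m+1}$, or (when $m\geq 3$) $S_{m+1}$. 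In each of these, the unique cyclic subgroup of index $2$ is $\langle x\rangle$, of order $2^m$; this matches $H_1=\mathrm{Gal}(L_{pq}^{(2)}/L_{pq}^*)\simeq \mathrm{Cl}_2(L_{pq}^*)\simeq \mathbb{Z}/2^m\mathbb{Z}$, so $L_{pq}^*$ plays the role of the field $F_1$ in Theorem~\ref{1}.

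To pin down $G_{L_{pq}}$ I would study the capitulation kernel of $j_1:\mathrm{Cl}_2(L_{pq})\to \mathrm{Cl}_2(L_{pq}^*)$. Using the fundamental system of units of $L_{pq}^*$ given by Theorem~\ref{thm main theorem on units} together with the explicit norms of Lemma~\ref{norms  case T}, I would identify $N_{L_{pq}^*/L_{pq}}(E_{L_{pq}^*})$ inside $E_{L_{pq}}$ and compute its index. The Chevalley ambiguous-class-number formula, in its capitulation form for unramified quadratic extensions, reads
$$|\ker j_1| = 2\cdot \bigl[E_{L_{pq}}\cap N_{L_{pq}^*/L_{pq}}((L_{pq}^*)^\times) : N_{L_{pq}^*/L_{pq}}(E_{L_{pq}^*})\bigr],$$
and the unit-index computation should yield $|\ker j_1|=2$, ruling out $D_{m+1}$. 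To confirm condition $A$, I would realize the capitulating class as an ambiguous class of $\mathrm{Cl}_2(L_{pq}^*)$; such classes lie in $N_{L_{pq}^*/L_{pq}}(\mathrm{Cl}_2(L_{pq}^*))$, so $\ker j_1 \cap N_{L_{pq}^*/L_{pq}}(\mathrm{Cl}_2(L_{pq}^*)) \neq \{1\}$, eliminating $S_{m+1}$. Theorem~\ref{1} then forces $G_{L_{pq}}\simeq Q_{m+1}$. The case of primes satisfying \eqref{cond 2} is treated identically, with the appropriate substitute for the unit group of $L_{pq}^*$.

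The chief obstacle is the control of signs: Lemma~\ref{norms  case T} determines the norms of the fourth-root generators of $E_{L_{pq}^*}$ only up to $\pm 1$, and the saturation of $N_{L_{pq}^*/L_{pq}}(E_{L_{pq}^*})$ inside $E_{L_{pq}}$, and hence $|\ker j_1|$, depends delicately on which signs actually occur. Resolving this requires either a careful sign analysis in the style of Table~\ref{table1}, or the production of a sign-independent element of $N_{L_{pq}^*/L_{pq}}(E_{L_{pq}^*})$ which already generates the relevant quotient of $E_{L_{pq}}$.

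For the second assertion, that $G_{L_{p'}}$ is of type $(2,2)$, I would show that the Hilbert $2$-class field tower of $L_{p'}$ terminates at the first step, so that $G_{L_{p'}}=\mathrm{Cl}_2(L_{p'})$. Since $\mathrm{Cl}_2(L_{p'})\simeq (2,2)$ under \eqref{cond 3} by \cite{chemsZkhnin1}, it suffices to verify that $h_2(L_{p'}^{(1)})$ is odd. I would identify the genus field of $L_{p'}$, show that its $2$-class group is trivial by a Wada class-number formula combined with an ambiguous-class-number bound (invoking the biquadratic residue condition $(2/p')_4\neq (p'/2)_4$ from \eqref{cond 3}), and then apply Proposition~\ref{prop the order of G_k} to obtain $|G_{L_{p'}}|=4$, forcing $G_{L_{p'}}$ to be abelian and, matching the type of $\mathrm{Cl}_2(L_{p'})$, of type $(2,2)$.
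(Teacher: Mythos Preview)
Your order computation $|G_{L_{pq}}|=2^{m+1}$ and the capitulation calculation $|\ker j_1|=2$ via Lemma~\ref{norms  case T} and Theorem~\ref{thm main theorem on units} match the paper. The substantive divergence is in how $S_{m+1}$ is excluded. The paper does \emph{not} verify Taussky's condition~A directly. Instead it observes that $L_{pq}^{(2)}=k_{pq}^{(2)}$, so $G_{L_{pq}}=\mathrm{Gal}(L_{pq}^{(2)}/L_{pq})$ sits as an index-$2$ subgroup of $G_{k_{pq}}=\mathrm{Gal}(k_{pq}^{(2)}/k_{pq})$; and \cite{Be05} already identifies $G_{k_{pq}}$ as quaternion or semidihedral under \eqref{cond 1}, and quaternion under \eqref{cond 2}. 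The index-$2$ subgroups of $Q_n$ and $S_n$ are cyclic, dihedral, or quaternion---never semidihedral---so $S_{m+1}$ is impossible for $G_{L_{pq}}$, and the capitulation count then eliminates $D_{m+1}$. Your attempt to check condition~A directly (``realize the capitulating class as an ambiguous class of $\mathrm{Cl}_2(L_{pq}^*)$'') conflates classes in $L_{pq}$ with classes in $L_{pq}^*$ and is not a complete argument.

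This difference matters most under condition~\eqref{cond 2}: there $G_{k_{pq}}$ is quaternion, so its noncyclic index-$2$ subgroups are automatically quaternion, and \emph{no} capitulation or unit computation is needed. This is why Section~\ref{section 1} only treats condition~\eqref{cond 1}. Your proposal to ``treat \eqref{cond 2} identically, with the appropriate substitute for the unit group of $L_{pq}^*$'' would require unit-group results the paper never establishes.

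Your sign worry is overstated: Lemma~\ref{norms  case T} gives $N(\varepsilon_p)=-1$ and $N(\zeta_8)=i$ exactly, so $-1$ and $\zeta_8$ lie in $N_{L_{pq}^*/L_{pq}}(E_{L_{pq}^*})$ regardless of the $\pm$ ambiguities, and combining with the remaining norms one sees that $N_{L_{pq}^*/L_{pq}}(E_{L_{pq}^*})$ contains $\zeta_8,\varepsilon_2,\varepsilon_{pq},\sqrt{\varepsilon_{pq}\varepsilon_{2pq}}$, i.e.\ all of $E_{L_{pq}}$; the index is~$1$ independent of the signs.

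For $G_{L_{p'}}$, the paper goes \emph{downward} rather than upward: $L_{p'}$ is the genus field of $k'=\mathbb{Q}(i,\sqrt{2p'})$, whose $2$-class group is of type $(2,4)$ by \cite{azizitaous(24)(222)}, and whose tower terminates at the first step by \cite{taous2008}; hence so does that of $L_{p'}$. Your proposed upward approach via the genus field of $L_{p'}$ itself is a different route and would require separate justification.
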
	
 \begin{proof}
 	\begin{enumerate}[\rm 1.]
 		\item   By Proposition \ref{prop the order of G_k} and Theorem \ref{class group of L*} we have $|G_{L_{pq}}|=2\cdot h_2(L_{pq}^*)=2^{m+1}$. Assume that  $p$ and $q$  verify  conditions {\eqref{cond 1}}.
 		According to \cite[Corollaire 17]{Be05},   the group $G_{k_{pq}}$ is quaternion or semidihedral.
 		By Galois theory we have $$Gal(L_{pq}^{(2)}/k_{pq})\simeq Gal(L_{pq}/k_{pq})\times G_{L_{pq}}.$$
 		Thus $G_{L_{pq}}$ is a subgroup of $G_{k_{pq}}$ of index $2$. Therefore  $G_{L_{pq}}$  is dihedral or quaternion. Since a FSU of $L_{pq}$ is $\{\varepsilon_{2},   \varepsilon_{pq},
 		\sqrt{\varepsilon_{pq}\varepsilon_{2pq}}\}$ (cf.\cite{chemsZkhnin1}),   then by \cite{Heider},  Theorem \ref{thm main theorem on units} and Lemma \ref{norms  case T},   the number of classes of $\mathrm{Cl}_2(L_{pq})$ which capitulate in $L_{pq}^*$ is	$[L_{pq}^*:L_{pq}][E_{L_{pq}}:N_{L_{pq}^*/L_{pq}}(E_{L_{pq}^*})]=2\cdot 1=2$.
 		So from Table \ref{tablecapi},  we deduce that $G_{L_{pq}}$ can not be dihedral. Hence $G_{L_{pq}}$ is quaternion.
 		
 		Suppose now that $p$ and $q$ verify the condition $ {\eqref{cond 2}} $. As previously we show that $G_{L_{pq}}$ is a subgroup of $G_{k_{pq}}$ of index $2$ and by \cite{Be05} $G_{k_{pq}}$ is quaternion. So  $G_{L_{pq}}$ is quaternion.
 		\item Let   $k'=\mathbb{Q}(\sqrt{-1},   \sqrt{2p'}) $. As $L_{p'}=\mathbb{Q}(\sqrt{-1},  \sqrt{ p'},   \sqrt{ 2})$ is the genus field of $k'$, so 
 		by \cite[Théorème 5.2]{azizitaous(24)(222)}, the $2$-class group of $k'$ is of type $(2,  4)$. Hence
 		\cite[Corollaire 1]{taous2008} implies that 
 		the Hilbet $2$-class field tower of $k$ terminates at the first step. Therefore,   the Hilbert $2$-class field tower of its genus field $L_{p'}$ terminates at the first step. So the result.
 	\end{enumerate}	
 \end{proof}	
 \begin{corollary} Keep  the assumptions of the previous Theorem $\ref{thm G_L quaternion}$. Then
 	\begin{enumerate}[\rm 1.]
 		\item There are exactly $2$ classes of $\mathrm{Cl}_2(L_{pq})$ which capitulate in each of the three unramified quadratic extensions of $L_{pq}$.
 		\item There are $4$ classes of $\mathrm{Cl}_2(L_{p'})$ which capitulate in each of the three unramified quadratic extensions of $L_{p'}$.
 	\end{enumerate}
 \end{corollary}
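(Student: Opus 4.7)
The plan is to read both assertions directly off Theorem~\ref{1} and Table~\ref{tablecapi}, using the structural information about $G_{L_{pq}}$ and $G_{L_{p'}}$ already established in Theorem~\ref{thm G_L quaternion}. Recall that for a number field $k$ with $\mathrm{Cl}_2(k)$ of type $(2,2)$, if $F_1,F_2,F_3$ denote the three unramified quadratic extensions of $k$ corresponding to the subgroups $H_1,H_2,H_3$ of index $2$, then the number of $2$-classes of $k$ that capitulate in $F_i$ equals $|\ker j_i|$, and these three integers are governed by the isomorphism type of $G_k$ exactly as recorded in Table~\ref{tablecapi}. Since $\mathrm{Cl}_2(L_{pq})\simeq (2,2)$ and $\mathrm{Cl}_2(L_{p'})\simeq (2,2)$ (by \S\ref{section 01} and~\cite{chemsZkhnin1}), the setup of Theorem~\ref{1} applies to both fields.

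For the first item, Theorem~\ref{thm G_L quaternion} gives $G_{L_{pq}}\simeq Q_{m+1}$ with $m\geq 2$. I would treat the two subcases $m+1=3$ and $m+1>3$ in a single sentence: in the row $G\simeq Q_3$ of Table~\ref{tablecapi} one reads $|\ker j_i|=2$ for $i=1,2,3$, and in the row $G\simeq Q_m$, $m>3$, the same value $|\ker j_i|=2$ occurs for every $i$ (the distinction $A$ versus $B$ is irrelevant for the count itself). Hence in both subcases exactly two classes of $\mathrm{Cl}_2(L_{pq})$ capitulate in each of the three unramified quadratic extensions of $L_{pq}$.

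For the second item, Theorem~\ref{thm G_L quaternion} says that $G_{L_{p'}}$ is of type $(2,2)$, i.e.\ $G_{L_{p'}}$ is abelian and $L_{p'}^{(1)}=L_{p'}^{(2)}$. This is precisely the first row of Table~\ref{tablecapi}, where $|\ker j_i|=4$ for $i=1,2,3$, so every element of $\mathrm{Cl}_2(L_{p'})$ capitulates in each unramified quadratic extension, giving the announced $4$ capitulating classes per extension.

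There is essentially no obstacle here: the corollary is a direct translation of Theorem~\ref{thm G_L quaternion} through Table~\ref{tablecapi}. The only small point worth spelling out is that the capitulation kernel $\ker j_i$ coincides with the set of classes that become principal in $F_i$, so that counting $|\ker j_i|$ is exactly counting capitulating classes; once this is recalled, the two items follow without further computation.
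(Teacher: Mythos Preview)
Your proposal is correct and is exactly the argument the paper intends: the corollary is stated without proof precisely because it is a direct reading of Table~\ref{tablecapi} once Theorem~\ref{thm G_L quaternion} has identified $G_{L_{pq}}\simeq Q_{m+1}$ and $G_{L_{p'}}\simeq (2,2)$. Your care in distinguishing the subcases $m+1=3$ and $m+1>3$ is appropriate, and your observation that the $A/B$ label does not affect the cardinality $|\ker j_i|$ is the only point that needed to be made explicit.
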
	
 
 \begin{lemma}Let $p$ and $q$ be two primes satisfying    conditions {\eqref{cond 1}}. Then
 	\begin{enumerate}[$\bullet$]\label{norms2  case T}
 		\item    $N_{L_{pq}^*/F_{pq}}(\sqrt{\varepsilon_{q}})= -1$ and    $N_{L_{pq}^*/F_{pq}}(\sqrt{\varepsilon_{pq}})=\varepsilon_{pq}$.
 		\item   $N_{L_{pq}^*/F_{pq}}(\varepsilon_{2})=-1$,   $N_{L_{pq}^*/F_{pq}}(\varepsilon_{p})=-1$ and $N_{L_{pq}^*/F_{pq}}(\zeta_8)=-1$.
 		\item $N_{L_{pq}^*/F_{pq}}(\sqrt{\varepsilon_{2}\varepsilon_{p}\varepsilon_{2p}})=\pm \varepsilon_{2p}$,
 		$N_{L_{pq}^*/F_{pq}}(\sqrt[4]{\zeta_8^2	\varepsilon_{q}\varepsilon_{2q}})=\pm\sqrt{-\varepsilon_{2q}} $ and   $N_{L_{pq}^*/F_{pq}}(\sqrt[4]{\varepsilon_{2}^2{\varepsilon_{q}}{\varepsilon_{pq}\varepsilon_{2pq}}})=\pm    \sqrt{ \varepsilon_{pq}}$.
 	\end{enumerate}
 \end{lemma}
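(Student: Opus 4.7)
The plan is to identify the nontrivial element $\tau$ of $\mathrm{Gal}(L_{pq}^{*}/F_{pq})$ and then evaluate $N_{L_{pq}^{*}/F_{pq}}(x)=x\,\tau(x)$ on each unit using the explicit expressions from Lemma \ref{lm expressions of units}. Since $F_{pq}=\QQ(\sqrt{2p},\sqrt{2q},\sqrt{-2})$ sits inside $L_{pq}^{*}=\QQ(\sqrt 2,\sqrt p,\sqrt q,i)$ with index $2$, and since the prescription $\sqrt 2\mapsto-\sqrt 2$, $\sqrt p\mapsto-\sqrt p$, $\sqrt q\mapsto-\sqrt q$, $i\mapsto-i$ fixes each of $\sqrt{2p}$, $\sqrt{2q}$ and $\sqrt{-2}=i\sqrt 2$, this prescription determines $\tau$. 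In the notation of the proof of Lemma \ref{29}, $\tau$ restricts to $\sigma_1\sigma_2\sigma_3$ on $\KK^{+}$ and acts as complex conjugation on $i$.

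For the first two bullets I will substitute the closed-form expressions of Lemma \ref{lm expressions of units}. For example, $\sqrt{\varepsilon_{q}}=\tfrac{1}{\sqrt 2}(d'_1+d'_2\sqrt q)$ gives $\tau(\sqrt{\varepsilon_{q}})=-\tfrac{1}{\sqrt 2}(d'_1-d'_2\sqrt q)$, so that $N_{L_{pq}^{*}/F_{pq}}(\sqrt{\varepsilon_{q}})=-\tfrac12(d_1'^{2}-qd_2'^{2})=-1$ from the relation $d_1'^{2}-qd_2'^{2}=2$. A key rewriting is $\sqrt{\varepsilon_{pq}}=\tfrac12(b_1\sqrt{2p}+b_2\sqrt{2q})$, which shows that $\sqrt{\varepsilon_{pq}}\in F_{pq}$; hence $\tau$ fixes it and the norm equals $\varepsilon_{pq}$. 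For $\varepsilon_{2}$, $\varepsilon_{p}$ and $\zeta_8$, the actions $\tau(\varepsilon_{2})=-\varepsilon_{2}^{-1}$, $\tau(\varepsilon_{p})=-\varepsilon_{p}^{-1}$ (using $N(\varepsilon_{2})=N(\varepsilon_{p})=-1$) and $\tau(\zeta_8)=-\zeta_8^{-1}$ yield the value $-1$ in each case.

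For the three fourth-root units of item 3, the method is to compute $\tau$ on the fourth power and then extract the square root inside $F_{pq}$. Setting $\alpha=\sqrt[4]{\varepsilon_{2}^{2}\varepsilon_{q}\varepsilon_{pq}\varepsilon_{2pq}}$ and using $\tau(\varepsilon_{2})=-\varepsilon_{2}^{-1}$, $\tau(\varepsilon_{q})=\varepsilon_{q}^{-1}$, $\tau(\varepsilon_{pq})=\varepsilon_{pq}$ (since $\sqrt{pq}\in F_{pq}$) and $\tau(\varepsilon_{2pq})=\varepsilon_{2pq}^{-1}$ (since $N(\varepsilon_{2pq})=1$, which I verify directly from Lemma \ref{lm expressions of units}), one obtains $(\alpha\tau(\alpha))^{4}=\varepsilon_{pq}^{2}$; combined with $\sqrt{\varepsilon_{pq}}\in F_{pq}$ this gives the value $\pm\sqrt{\varepsilon_{pq}}$. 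For $\sqrt[4]{\zeta_8^{2}\varepsilon_{q}\varepsilon_{2q}}$ the same procedure yields $(\alpha\tau(\alpha))^{4}=\varepsilon_{2q}^{2}$, and I use the identity $\sqrt{-\varepsilon_{2q}}=\tfrac{1}{\sqrt{-2}}(d_1+d_2\sqrt{2q})\in F_{pq}$ to conclude $\pm\sqrt{-\varepsilon_{2q}}$. The unit $\sqrt{\varepsilon_{2}\varepsilon_{p}\varepsilon_{2p}}$ is easier because $\varepsilon_{2p}\in F_{pq}$ is fixed by $\tau$, so $\tau(\varepsilon_{2}\varepsilon_{p}\varepsilon_{2p})=\varepsilon_{2p}/(\varepsilon_{2}\varepsilon_{p})$ and squaring gives $\pm\varepsilon_{2p}$ at once.

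The only real subtlety is the $\pm$ ambiguity: because $N(\alpha)$ is determined only up to sign by $N(\alpha)^{2}$, the fourth-power computation cannot pin down the branch without a delicate sign analysis. This is exactly why the statement is phrased with $\pm$, and the ambiguity is harmless for the intended application in Theorem \ref{thm G_L quaternion}, where only the image of $N_{L_{pq}^{*}/F_{pq}}$ modulo squares in $E_{F_{pq}}$ matters.
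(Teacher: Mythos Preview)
Your proof is correct and follows essentially the same strategy as the paper: identify the nontrivial automorphism of $L_{pq}^{*}/F_{pq}$ (the paper does this implicitly, you do it explicitly as $\tau:\sqrt 2,\sqrt p,\sqrt q,i\mapsto -\sqrt 2,-\sqrt p,-\sqrt q,-i$), then evaluate $x\,\tau(x)$ using the explicit expressions of Lemma~\ref{lm expressions of units}. The only noteworthy difference is in the third item: for the fourth-root units the paper computes the norm of the \emph{square} directly (e.g.\ $N(\zeta_8\sqrt{\varepsilon_q}\sqrt{\varepsilon_{2q}})=(-1)(-1)(-\varepsilon_{2q})=-\varepsilon_{2q}$, using the explicit computation $N(\sqrt{\varepsilon_{2q}})=-\varepsilon_{2q}$), whereas you pass to the fourth power and then recover the correct branch via the observation that $\sqrt{-\varepsilon_{2q}}\in F_{pq}$ while $\sqrt{\varepsilon_{2q}}\notin F_{pq}$ (and similarly $\sqrt{\varepsilon_{pq}}\in F_{pq}$ while $i\notin F_{pq}$). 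Both routes are valid; the paper's is a touch more direct, yours makes the membership in $F_{pq}$ explicit, which is in any case needed for the application.
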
	
 \begin{proof}
 	\begin{enumerate}[$\bullet$]
 		\item 	Let us  use and keep the notations notations of Lemma \ref{lm expressions of units}. We have:\\  $N_{L_{pq}^*/F_{pq}}(\sqrt{\varepsilon_{q}})=\frac{1}{\sqrt{2}}(d_1'+d_2'\sqrt{q})\cdot \frac{1}{-\sqrt{2}}(d_1'-d_2'\sqrt{q})=\frac{-1}{2}(d_1'^2-d_2'^2{q})=-1$.\\
 		$N_{L_{pq}^*/F_{pq}}(\sqrt{\varepsilon_{pq}})=\frac{1}{\sqrt{2}}(b_1\sqrt{2p}+b_2\sqrt{q})\cdot \frac{1}{-\sqrt{2}}(-b_1\sqrt{2p}-b_2\sqrt{q})
 		=\sqrt{\varepsilon_{pq}}\cdot\sqrt{\varepsilon_{pq}}= \varepsilon_{pq}$.
 		\item 	The first norm and the second norm in the second point are direct. We have
 		$N_{L_{pq}^*/F_{pq}}(\zeta_8)=N_{L_{pq}^*/F_{pq}}(\frac{1}{\sqrt{2}}(1+i))=\frac{-1}{ {2}}(1-i^2)=-1$.
 		
 		\item 	Note that $N_{L_{pq}^*/L_{pq}}( { \varepsilon_{2p}})=\varepsilon_{2p}^2$. Then $N_{L_{pq}^*/L_{pq}}( {\varepsilon_{2}\varepsilon_{p}\varepsilon_{2p}})=(-1)\cdot  (-1)\cdot \varepsilon_{2p}^2$. Thus	$N_{L_{pq}^*/L_{pq}}(\sqrt{\varepsilon_{2}\varepsilon_{p}\varepsilon_{2p}})=\pm \varepsilon_{2p}$.\\
 		We have $N_{L_{pq}^*/F_{pq}}(\sqrt{\varepsilon_{2q}})=\frac{1}{\sqrt{2}}(d_1+d_2\sqrt{2q})\cdot \frac{1}{-\sqrt{2}}(d_1+d_2\sqrt{2q}) =-\varepsilon_{2q}$,  then
 		$N_{L_{pq}^*/F_{pq}}(\zeta_8\sqrt{ 	\varepsilon_{q}}\sqrt{\varepsilon_{2q}})=(-1)\cdot (-1)\cdot (-\varepsilon_{2q})=-\varepsilon_{2q}$. Therefore $N_{L_{pq}^*/F_{pq}}(\sqrt[4]{\zeta_8^2	\varepsilon_{q}\varepsilon_{2q}})= \pm \sqrt{-\varepsilon_{2q}}$.\\
 		We have $N_{L_{pq}^*/F_{pq}}(\sqrt{\varepsilon_{2pq}})=\frac{1}{\sqrt{2}}(y_1\sqrt{2p}+y_2\sqrt{q})\cdot \frac{1}{-\sqrt{2}}(y_1\sqrt{2p}-y_2\sqrt{q})
 		=\frac{-1}{2} (y_1^2{2p}-y_2^2{q})=1$. So
 		
 		$N_{L_{pq}^*/F_{pq}}(\varepsilon_{2}\sqrt{{\varepsilon_{q}}}\sqrt{{\varepsilon_{pq}}}\sqrt{\varepsilon_{2pq}})=  (-1)\cdot (-1) \cdot    \varepsilon_{pq}  \cdot 1=\varepsilon_{pq}$.
 		Therefore  $N_{L_{pq}^*/F_{pq}}(\sqrt[4]{\varepsilon_{2}^2{\varepsilon_{q}}{\varepsilon_{pq}\varepsilon_{2pq}}})=\pm  \sqrt{ \varepsilon_{pq}}$.
 	\end{enumerate}	
 \end{proof}
 
 \begin{theorem}\label{th 2 on G_F}
 	Let $m\geq 2$ such that $h_2(-2q)=2^m$.
 	\begin{enumerate}[\rm 1.]
 		\item Let $p$ and $q$ be two primes satisfying    conditions {\eqref{cond 1}}. Then  $G_{F_{pq}}\simeq D_{m+1}$.
 		\item Let $p$ and $q$ be two primes satisfying   conditions $ {\eqref{cond 2}} $. Then $G_{F_{pq}}\simeq Q_{m+1}$.
 	\end{enumerate}
 \end{theorem}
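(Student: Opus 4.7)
The plan is to mirror the proof of Theorem~\ref{thm G_L quaternion} with $F_{pq}$ in place of $L_{pq}$, falling back on a capitulation count only when the subgroup argument alone fails to pin down the isomorphism type of $G_{F_{pq}}$.

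First, I would establish that $|G_{F_{pq}}| = 2^{m+1}$. Since $L_{pq}^{*} \subset k_{pq}^{(1)}$, the extension $L_{pq}^{*}/F_{pq}$ is abelian and unramified, and the corollary following Theorem~\ref{class group of L*} gives $G_{L_{pq}^{*}}$ abelian. Proposition~\ref{prop the order of G_k} applied with $K = F_{pq}$ and $L = L_{pq}^{*}$ then yields $F_{pq}^{(2)} = L_{pq}^{*(1)} = K_{pq}^{(1)}$ and $|G_{F_{pq}}| = [L_{pq}^{*}:F_{pq}] \cdot h_{2}(L_{pq}^{*}) = 2 \cdot 2^{m} = 2^{m+1}$. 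Because all four of $L_{pq}^{(2)}$, $K_{pq}^{(1)}$, $F_{pq}^{(2)}$ and $k_{pq}^{(2)}$ coincide (a degree count confirms this, as in the proof of Theorem~\ref{class group of L*}), the same Galois-theoretic setup as in the proof of Theorem~\ref{thm G_L quaternion} identifies $G_{F_{pq}}$ with an index-$2$ subgroup of $G_{k_{pq}}$.

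Next I invoke the classification of $G_{k_{pq}}$ from~\cite{Be05}: under conditions~\eqref{cond 2} it is quaternion, and under conditions~\eqref{cond 1} it is quaternion or semidihedral. The catalogue of index-$2$ subgroups recorded in Section~\ref{section 0} shows that such a subgroup must be cyclic, dihedral, or quaternion; since $G_{F_{pq}}/G_{F_{pq}}' \simeq \mathrm{Cl}_{2}(F_{pq})$ is of type $(2,2)$ by Theorem~\ref{thm 2-class group of the 3 triquad}, the cyclic possibility is ruled out. Case~2 is then immediate: $G_{F_{pq}}$ is quaternion of order $2^{m+1}$, i.e.\ $G_{F_{pq}} \simeq Q_{m+1}$.

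For case~1, $G_{F_{pq}}$ is still dihedral or quaternion, and the distinction is decided by counting capitulated classes in $L_{pq}^{*}/F_{pq}$. By Theorem~\ref{class group of L*} and Remark~\ref{rmk 1 preliminaries}, $L_{pq}^{*}$ is the unique unramified quadratic extension of $F_{pq}$ with cyclic $2$-class group, so it plays the role of $F_{1}$ in Table~\ref{tablecapi}. Heider's formula reads $|\ker j_{1}| = [L_{pq}^{*}:F_{pq}] \cdot [E_{F_{pq}} : N_{L_{pq}^{*}/F_{pq}}(E_{L_{pq}^{*}})]$. Feeding the FSU from Theorem~\ref{thm main theorem on units} into Lemma~\ref{norms2 case T} gives that $N_{L_{pq}^{*}/F_{pq}}(E_{L_{pq}^{*}})$ is generated, modulo signs, by $-1, \varepsilon_{pq}, \varepsilon_{2p}, \sqrt{-\varepsilon_{2q}}$ and $\sqrt{\varepsilon_{pq}}$. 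Comparing this subgroup with a fundamental system of units of $F_{pq}$ should yield index $2$, whence $|\ker j_{1}| = 4$, and the $F_{1}$-column of Table~\ref{tablecapi} then forces $G_{F_{pq}} \simeq D_{m+1}$.

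The main obstacle is the last comparison, i.e.\ pinning down a FSU of the degree-$8$ field $F_{pq} = \QQ(\sqrt{2p}, \sqrt{2q}, \sqrt{-2})$. I would either run Wada's algorithm in the style of Lemmas~\ref{29} and~\ref{thm units case T}, using norm maps to the seven quadratic and the three biquadratic subfields to rule out squares, or, more economically, compute the unit index $q(F_{pq})$ from the class number formula together with the known $h_{2}(F_{pq}) = 4$ (Theorem~\ref{thm 2-class group of the 3 triquad}) and the tabulated $2$-class numbers of the quadratic subfields; once $q(F_{pq})$ is in hand, the index $[E_{F_{pq}} : N_{L_{pq}^{*}/F_{pq}}(E_{L_{pq}^{*}})]$ follows by checking which of the listed norm generators are independent modulo the FSU.
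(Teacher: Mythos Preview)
Your plan is essentially the paper's proof: fix $|G_{F_{pq}}|=2^{m+1}$ via Proposition~\ref{prop the order of G_k} and Theorem~\ref{class group of L*}, identify $G_{F_{pq}}$ with an index-$2$ subgroup of $G_{k_{pq}}$, cite \cite{Be05} for the structure of $G_{k_{pq}}$ (which settles case~2 outright), and in case~1 distinguish dihedral from quaternion by the capitulation count $[L_{pq}^*:F_{pq}]\cdot[E_{F_{pq}}:N_{L_{pq}^*/F_{pq}}(E_{L_{pq}^*})]$. The only place you overwork is what you call the ``main obstacle'': the paper does not compute a FSU of $F_{pq}$ via Wada's algorithm or a unit-index detour, but simply notes from Lemma~\ref{norms2  case T} that $\sqrt{\varepsilon_{pq}}\in F_{pq}$ and then reads off the FSU $\{\sqrt{\varepsilon_{pq}},\,\varepsilon_{2p},\,\sqrt{-\varepsilon_{2q}}\}$ directly from \cite[Proposition~5]{Be05}, after which the index~$2$ and hence $|\ker j_1|=4$ are immediate.
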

 \begin{proof}
 	\begin{enumerate}[\rm 1.]
 		\item Since, by the third point of Lemma \ref{norms2  case T},  $\sqrt{\varepsilon_{ pq}}\in F_{pq}$,  then according to \cite[Proposition 5]{Be05},  a FSU of $F_{pq}$ is given by
 		$\{\sqrt{\varepsilon_{ pq}}, \varepsilon_{2p}, \sqrt{-\varepsilon_{2q}}\}$. As in the proof of  Theorem \ref{thm G_L quaternion} and using the same references we deduce that  $G_{F_{pq}}\simeq Q_{m+1}$ or $D_{m+1}$.
 		By Lemma \ref{norms2  case T},    \cite{Heider} and  Theorem \ref{thm main theorem on units} ,   the number of classes of $\mathrm{Cl}_2(F_{pq})$ which capitulate in $L_{pq}^*$ is	$[L_{pq}^*:F_{pq}][E_{F_{pq}}:N_{L_{pq}^*/F_{pq}}(E_{L_{pq}^*})]=2\cdot 2=4$. So the first item.
 		\item The proof of the second item is similar to that  of Theorem \ref{thm G_L quaternion}.
 	\end{enumerate}
 \end{proof}
 \begin{corollary}~\
 	\begin{enumerate}[\rm 1.]
 		\item Let $p$ and $q$ be two primes satisfying    conditions {\eqref{cond 1}}. Then there are  $4$ classes of $\mathrm{Cl}_2(F_{pq})$ which capitulate in  $L_{pq}^*$ and there are exactly $2$ classes of $\mathrm{Cl}_2(F_{pq})$ which capitulate in each of the other quadratic unramified extensions of $F_{pq}$.
 		\item Let $p$ and $q$ be two primes satisfying   conditions ${\eqref{cond 2}}$.  Then,   there are exactly $2$ classes of $\mathrm{Cl}_2(F_{pq})$ which capitulate in each of the three unramified quadratic extensions of $F_{pq}$.
 	\end{enumerate}
 	
 \end{corollary}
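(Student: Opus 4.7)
The plan is to combine Theorem \ref{th 2 on G_F}, which pins down $G_{F_{pq}}$ as $D_{m+1}$ or $Q_{m+1}$ according to the congruence case, with the capitulation classification in Theorem \ref{1} (recorded in Table \ref{tablecapi}), and with the explicit capitulation count in $L_{pq}^*/F_{pq}$ that was already extracted inside the proof of Theorem \ref{th 2 on G_F}. Throughout, I will use the fact that $\mathrm{Cl}_2(F_{pq})$ has type $(2,2)$, so it has exactly three unramified quadratic extensions $F_1, F_2, F_3$ and each capitulation kernel has order $1$, $2$, or $4$.

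For the first item, I would proceed as follows. Under conditions \eqref{cond 1}, Theorem \ref{th 2 on G_F} gives $G_{F_{pq}}\simeq D_{m+1}$ with $m+1\geq 3$, so the $D_m$ row of Table \ref{tablecapi} applies: one of $F_1, F_2, F_3$ has capitulation kernel of order $4$, while the other two have kernel of order $2$. To identify which of the three is $L_{pq}^*$, I would reuse the computation at the end of the proof of Theorem \ref{th 2 on G_F}: Heider's index formula produced
\[
[L_{pq}^*:F_{pq}]\,[E_{F_{pq}}:N_{L_{pq}^*/F_{pq}}(E_{L_{pq}^*})]=2\cdot 2=4,
\]
and since $|\mathrm{Cl}_2(F_{pq})|=4$, this forces every class of $F_{pq}$ to capitulate in $L_{pq}^*$. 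Hence $L_{pq}^*$ is the distinguished extension realizing the kernel of order $4$, and each of the two remaining unramified quadratic extensions of $F_{pq}$ must have capitulation kernel of order exactly $2$, as claimed.

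For the second item, Theorem \ref{th 2 on G_F} delivers $G_{F_{pq}}\simeq Q_{m+1}$ with $m+1\geq 3$. Whether $m+1=3$ (the $Q_3$ row of Table \ref{tablecapi}) or $m+1>3$ (the $Q_m$, $m>3$ row), all three of $|\ker j_1|$, $|\ker j_2|$, $|\ker j_3|$ are equal to $2$, so exactly two classes of $\mathrm{Cl}_2(F_{pq})$ capitulate in each of the three unramified quadratic extensions. No real obstacle is expected: the heavy lifting (the unit computations and the identification of $G_{F_{pq}}$) is already packaged in Theorem \ref{thm main theorem on units} and Theorem \ref{th 2 on G_F}, and the only mildly non-mechanical step is matching $L_{pq}^*$ to the $F_1$-slot in the dihedral case, which is immediate from the already-known count of capitulating classes in $L_{pq}^*$.
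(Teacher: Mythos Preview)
Your proposal is correct and matches the paper's intended argument: the corollary is stated without proof, so it is meant to follow immediately from Theorem~\ref{th 2 on G_F} together with Table~\ref{tablecapi}, exactly as you describe. Your identification of $L_{pq}^*$ with the distinguished extension in the dihedral case via the capitulation count already computed in the proof of Theorem~\ref{th 2 on G_F} is precisely the step the paper leaves implicit.
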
	
 
 \begin{remark}
 	\begin{enumerate}[\rm $\bullet$]

 		\item  Assume  that $p$ and $q$ verify conditions {\eqref{cond 1}} or   {\eqref{cond 2}}. The authors of \cite{Be05} didn't determine the order  of $G_{k_{pq}}$,  but now by the above results it is easy to see that  $|G_{k_{pq}}|=2^{m+2}$,   with   $m\geq 2$ such that $h_2(-2q)=2^m$, and so we have the following diagram (see Figure \ref{Fig 2}):

 		\begin{center}
 			\begin{figure}[H]\label{ diagram Lp/Qi}
 				\hspace*{5cm}
 				\begin{minipage}{5cm}
 					{\footnotesize
 						\hspace{0.5cm}\begin{tikzpicture} [scale=1.2]
 						\node (k)  at (0,  0) {$k_{pq}$};
 						\node (K)  at (-1,  1) {$K_{pq}$};
 						\node (F)  at (1,  1) {$F_{pq}$};
 						\node (L)  at (0,  1) {$L_{pq}$};
 						\node (L*)  at (0,  2) {$L_{pq}^*=k_{pq}^{(1)}$};
 						\node (Lpq1)  at (0,  3) {$L_{pq}^{(1)}=F_{pq}^{(1)}$};
 						\node (Lpq1*1)  at (0,  5.5) {${{L_{pq}^*}}^{(1)}=F_{pq}^{(2)}=L_{pq}^{(2)}=K_{pq}^{(1)}=K_{pq}^{(2)}=k_{pq}^{(2)}$};
 						\draw (k) --(K)  node[scale=0.4,  midway,  below right]{};
 						\draw (k) --(F)  node[scale=0.4,  midway,  below right]{};
 						\draw (k) --(L)  node[scale=0.4,  midway,  below right]{\Large \bf2};
 						\draw (k) --(L)  node[scale=0.4,  midway,  below right]{};
 						\draw (L) --(L*)  node[scale=0.4,  midway,  below right]{\Large \bf2};
 						\draw (K) --(L*)  node[scale=0.4,  midway,  below right]{};
 						\draw (F) --(L*)  node[scale=0.4,  midway,  below right]{};
 						\draw (L*) --(Lpq1)  node[scale=0.4,  midway,  below right]{\Large \bf \hspace{-0.2cm}\;\;\;2};
 						\draw (Lpq1) --(Lpq1*1)  node[scale=0.4,  midway,  below right]{\Large \bf2$^{\text{m-1 }}$};
 						\end{tikzpicture}}
 				\end{minipage}
 				\caption{The Hilbert $2$-class field towers.}\label{Fig 2}
 			\end{figure}
 		\end{center}
 		\item Note also that the authors of \cite{Be05}   didn't determine the exact structure  of $G_{k_{pq}}$ whenever $p$ and $q$ satisfy conditions {\eqref{cond 1}}. Now by our above  results it is easy to see that, under conditions  {\eqref{cond 1}}, $G_{k_{pq}}$ is semidihedral of order $2^{m+2}$.
 	\end{enumerate}
 \end{remark}

\end{document}